\def\eod{\vrule height 6pt width 5pt depth 0pt}
\newenvironment{proof}{\noindent {\bf Proof:} \hspace{.2em}}
{\hspace*{\fill}{\eod}}
\newcommand{\floor}[1]{\left\lfloor #1 \right\rfloor}
\newcommand{\etal}{\textit{et al.}}
\newcommand{\exc}{\mathrm{exc}}
\newcommand{\antiexc}{\mathrm{nexc}}
\newcommand{\wexc}{\mathrm{wkexc}}
\newcommand{\ExcSa}{\mathrm{EXC}}
\newcommand{\des}{\mathrm{des}}
\newcommand{\asc}{\mathrm{asc}}
\newcommand{\DescSet}{\mathrm{DES}}
\newcommand{\ascSet}{\mathrm{ASC}}
\newcommand{\inv}{\mathrm{inv}}
\newcommand{\SSS}{\mathfrak{S}}
\newcommand{\BB}{\mathfrak{B}}
\newcommand{\Der}{\mathfrak{SD}}
\newcommand{\AAA}{\mathcal{A}}
\newcommand{\GE}{ \mathsf{AExc}}
\newcommand{\Negs}{\mathsf{Negs}}
\newtheorem{theorem}{Theorem}
\newtheorem{corollary}[theorem]{Corollary}
\newtheorem{conjecture}[theorem]{Conjecture}
\newtheorem{remark}[theorem]{Remark}
\newtheorem{problem}[theorem]{Problem}
\newtheorem{definition}[theorem]{Definition}
\newtheorem{lemma}[theorem]{Lemma}
\newcommand{\comment}[1]{}
\begin{document}

	\title{Log-concavity of the Excedance Enumerators in 
		positive elements of Type A and Type B Coxeter Groups}
	
	\author{Hiranya Kishore Dey\\ 
		Department of Mathematics\\
		Indian Institute of Technology, Bombay\\
		Mumbai 400 076, India.\\
		email: hkdey@math.iitb.ac.in
	}
	
	\maketitle

\begin{abstract}

The classical Eulerian Numbers  $A_{n,k}$ are known to be 
log-concave. 
Let $P_{n,k}$ and 
$Q_{n,k}$ be the number of even and odd permutations with $k$ 
excedances. In this paper, we show that $P_{n,k}$ and 
$Q_{n,k}$ are  log-concave. For this, we introduce the notion of 
strong 
synchronisation and ratio-alternating which are motivated by 
the notion of synchronisation and ratio-dominance, introduced 
by Gross, 
Mansour, Tucker and Wang in 2014. 

We show similar results for Type B Coxeter Groups. We finish with some 
conjectures to emphasize the following: though strong synchronisation is  
stronger than log-concavity, many pairs of interesting combinatorial 
families of sequences seem to satisfy this property.

\end{abstract}	


\section{Introduction}
\label{sec:intro}

Log-concavity and unimodality are well-studied properties of 
combinatorial sequences. They often appear in various areas
 of mathematics such as combinatorics, probability and algebra.
  The papers of Br{\"a}nd{\'e}n
\cite{branden-unimodality_log_concavity}, Brenti (\cite{brentilogconcavepolya}, \cite{brentilogconcave})
and Stanley  \cite{stanleylogconcave} contain a wealth of 
information about various 
results on log-concavity. 
\begin{definition} 
	\label{defn:log_concave}
	A sequence $( a_k )_{k=0}^n$ is said 
	to be log-concave if for all $i=1,2,\ldots,n-1 $, we have
	$ a_{i} ^2 \geq a_{i-1} a_{i+1}.$
\end{definition}

\begin{definition} 
	\label{defn:unimodal}
	A  sequence $(a_k)_{k=0}^n$ is said to be
	unimodal if  there exists an index $0 \leq r \leq n$ such 
	that $a_0 \leq a_1 \leq  \ldots  \leq a_{r-1} \leq a_r \geq a_{r+1} \geq 
	\ldots \geq a_n$. 
\end{definition}

In this work, we will only deal with finite and  
non-negative sequences. Define a polynomial to be 
log-concave (and unimodal respectively), 
if the sequence of its coefficients is 
log-concave (and unimodal respectively). 
If a non-negative sequence  $(a_k)_{k=0}^n$ is 
log-concave and does not have any internal zero,
then there cannot be any $j$ such that 
$a_{j-1}>a_{j}<a_{j+1}$ and so the sequence $(a_k)_{k=0}^n$ must 
be unimodal. Many methods have been incorporated to establish the log-concavity of various combinatorial sequences. If the sequence satisfies some `nice' formula or recurrence, then by direct manipulation one can show log-concavity.
Another approach towards proving the log-concavity of a sequence
 is showing the real-rootedness of the associated  polynomial.  Combinatorial polynomials are
often real-rooted and 
Newton showed that real-rooted 
polynomials are log-concave. Thus this criterion directly solves many log-concavity related 
problems (see Petersen \cite[Chapter 4]{petersen-eulerian-nos-book}). Another interesting way of attacking a log-concavity problem
 is by directly giving a combinatorial proof. If $a_0, a_1, \dots, a_n$ is any sequence of non-negative integers for which a combinatorial
  meaning is known (that is, we have sets $S_0,S_1,\dots,S_n$ such
   that $|S_i|=a_i$), then constructing an  injection $\phi_k:S_{k-1} \times S_{k+1} \to S_k \times S_k$ yields a combinatorial proof of $a_k ^ 2 \geq a_{k+1}a_{k-1}$. One can take a look at \cite{saganinductivelogconcavity} where Sagan gave combinatorial proof of log-concavity of
    some combinatorial sequences. In this work, 
    we are interested in the following question:

Suppose we have two sequences $A=( a_k )_{k=0}^n$ and $B=( b_k )_{k=0}^n$. 
Let us define $S(A,B)$ to be 
the set of all sequences $C=( c_k )_{k=0}^n$ such that for each $k$, $c_k \in 
\{a_k,b_k\}$. $S(A,B)$ is actually the set of all $2^{n+1}$ sequences, which can be cooked up by using the two given sequences $A=( a_k )_{k=0}^n$ and $B=( b_k )_{k=0}^n$. The natural question, that comes to mind, is whether all the  sequences in $S(A,B)$ are log-concave or not. In this work, we investigate the above question for some interesting combinatorial pair of sequences.

For a positive integer $n$, let $[n] = \{1,2,\ldots,n \}$ and 
let $\SSS_n$ be the set of permutations on $[n]$.
For
$\pi = \pi_1,\pi_2,\ldots,\pi_n \in \SSS_n$, define its excedance
set as $\ExcSa(\pi) = \{ i \in [n]: \pi_i > i\}$  and its number of 
excedances as $\exc(\pi) = |\ExcSa(\pi)|$. Define  its number of 
antiexcedances as $\antiexc (\pi) = |\{ i \in [n]: \pi_i \leq i\}|$ 
and  inversions as $\inv(\pi) = | \{ 1 \leq i < j \leq n : \pi_i > \pi_j \} |$. Let $\DescSet(\pi) = \{i \in [n-1]: \pi_i > \pi_{i+1} \}$ and 
$\ascSet(\pi) = \{i \in [n-1]: \pi_i < \pi_{i+1} \}$
be its set of descents and  ascents respectively. 
Let $\des(\pi) = |\DescSet(\pi)|$ be its number of descents 
and $\asc(\pi) = |\ascSet(\pi)|$  be its number of ascents.  Let 
$\AAA_n \subseteq \SSS_n$ be the subset of
even permutations. Let $E_{n,k}$, $P_{n,k}$ and $Q_{n,k}$ be the number of 
permutations with $k$ excedances in $\SSS_n$, $\AAA_n$ and 
$\SSS_n-\AAA_n$ respectively.
Define

\vspace{-5 mm}
\begin{eqnarray}
\label{eqn:des}
A_n(t)  =  \sum_{\pi \in \SSS_n} t^{\des(\pi)} = 
\sum _{k=0}^{n-1}A_{n,k}t^k 
\mbox{ 
	\hspace{1 mm} 
	and 
	\hspace{1 mm}
} 
\GE_n(t)  =  \sum_{\pi \in \SSS_n} t^{\exc(\pi)} =
\sum _{k=0}^{n-1}E_{n,k}t^k, \\
\label{eqn:exc_even}
\GE^+_n(t)  =  \sum_{\pi \in \AAA_n} t^{\exc(\pi)} =
\sum _{k=0}^{n-1}P_{n,k}t^k
\mbox{ 
	\hspace{1 mm} 
	and 
	\hspace{1 mm}
} 
\GE^-_n(t)  =  \sum_{\pi \in \SSS_n - \AAA_n} t^{\exc(\pi)} =\sum _{k=0}^{n-1}Q_{n,k}t^k. 
\end{eqnarray}

It is a well known result of MacMahon \cite{macmahon-book} 
that both 
descents and excedances are equidistributed over $\SSS_n$. 
That is, for all positive integers $n$ and $0 \leq k \leq n-1$,   $A_{n,k}=E_{n,k} $. 
$A_n(t)$ is known to be real-rooted for all $n$ and 
hence the $A_{n,k}$s are log-concave. 
But the excedance enumerating 
polynomial over $\AAA_n$ and $\SSS_n- \AAA_n$ are 
not always real-rooted and hence log-concavity of  $P_{n,k}$ and $Q_{n,k}$
are not immediate. Moreover, we ask whether all the sequences that can be cooked up by using $P_{n,k}$ and $Q_{n,k}$ are log-concave. To answer this question, we introduce a notion of 
{\sl strong synchronisation}  which is influenced 
by the notion of {\sl synchronisation} as defined in the paper by Gross,
Mansour, Tucker and Wang \cite{mansourgross4authorcombinationoflogconcavity}. 
They defined the following. 

\begin{definition}
	\label{s}
	Two non-negative  sequences $A=(a_k)_{k=0}^n$ and $B=(b_k)_{k=0}^n$ 
	are said to be synchronised, 
	denoted as $A \sim{} B$  if both are log-concave and they satisfy 
	$a_{k-1}b_{k+1} \leq a_kb_k$ and 
	$a_{k+1}b_{k-1} \leq a_kb_k$ for all $1 \leq k \leq n-1$. 
\end{definition}
Here we generalise this further and define the following notion 
of strong synchronisation of two sequences.

\begin{definition}
	\label{def:ss}
	Two non-negative  sequences $A=(a_k)_{k=0}^n$ and $B=(b_k)_{k=0}^n$ 
	are said to be strongly synchronised, denoted as $A \approx B$ if the following holds for all $1 \leq k \leq n-1:$ 
		\begin{eqnarray}
	\label{eqn:defnstrongsynchro}
	(\min\{a_k,b_k\})^2 \geq \max\{a_{k+1},b_{k+1}\}.\max \{ a_{k-1},b_{k-1}\}.
	\end{eqnarray}
	 Clearly strong synchronisation 
	implies log-concavity of both sequences $A$ and $B$. 	
\end{definition}
For $n=5$, consider the following sequences $(P_{5,k})_{k=0}^4$ and $(Q_{5,k})_{k=0}^4$  :
\begin{eqnarray}
\label{strongsyneqexample}
P_{5} & = & (1,11,36,11,1) \nonumber,  \\
Q_{5} & = & (0,15,30,15,0) \nonumber.
\end{eqnarray}
It is easy to check that the sequences $(P_{5,k})_{k=0}^4$ and $(Q_{5,k})_{k=0}^4$
satisfy \eqref{eqn:defnstrongsynchro} and hence they are strongly synchronised.


Clearly, strong 
synchronisation is a much stronger property than 
synchronisation. Recall the sequences $P_{n,k}$ and $Q_{n,k}$ from \eqref{eqn:exc_even}.
One of our main result in this paper is 
the following:

\begin{theorem} 
	\label{thm:MainresultforevenlogconcaveTypeA}
	For  positive integers $n$, the sequences $P_n=(P_{n,k})_{k=0}^{n-1}$ 
	and $Q_n=(Q_{n,k})_{k=0}^{n-1}$
	are strongly synchronised and hence log-concave. 
\end{theorem}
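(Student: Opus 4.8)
The plan is to derive explicit closed-form expressions for $P_{n,k}$ and $Q_{n,k}$ in terms of the classical Eulerian numbers $E_{n,k}$, and then verify the strong synchronisation inequality \eqref{eqn:defnstrongsynchro} by reducing it to known properties of the $E_{n,k}$. The starting point is the sign-balance identity for excedances. Writing $P_{n,k} + Q_{n,k} = E_{n,k}$ and considering the signed sum $P_{n,k} - Q_{n,k} = \sum_{\pi \in \SSS_n, \exc(\pi)=k} \sgn(\pi)$, one can evaluate the latter: since $\sgn(\pi) = (-1)^{n - (\text{number of cycles})}$ and there are standard bijective/generating-function tools (for instance via the transfer matrix for the joint distribution of excedances and cycles, or via the $q$-Eulerian polynomial of Shareshian–Wachs specialized at $q = -1$), the signed excedance enumerator $\GE^+_n(t) - \GE^-_n(t)$ has a clean form — it is essentially $(1-t)^{n-1}$ up to normalization, or more precisely $(-1)^{n-1}$ times a small polynomial. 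First I would pin down this identity exactly, obtaining $P_{n,k} = \tfrac12\big(E_{n,k} + (-1)^{n-1}\binom{n-1}{k}(-1)^k\big)$ and $Q_{n,k} = \tfrac12\big(E_{n,k} - (-1)^{n-1}(-1)^k\binom{n-1}{k}\big)$, so that $\{P_{n,k}, Q_{n,k}\} = \big\{\tfrac12(E_{n,k} + \binom{n-1}{k}), \tfrac12(E_{n,k} - \binom{n-1}{k})\big\}$ after accounting for parity, with the binomial term appearing with alternating sign in $k$.

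Once the formulas are in hand, the identity $\min\{P_{n,k}, Q_{n,k}\} = \tfrac12\big(E_{n,k} - \binom{n-1}{k}\big)$ and $\max\{P_{n,k}, Q_{n,k}\} = \tfrac12\big(E_{n,k} + \binom{n-1}{k}\big)$ holds for all $k$ (the role of max and min does not flip, which is the key structural fact), so the desired inequality \eqref{eqn:defnstrongsynchro} becomes
\begin{eqnarray}
\big(E_{n,k} - \tbinom{n-1}{k}\big)^2 \;\geq\; \big(E_{n,k+1} + \tbinom{n-1}{k+1}\big)\big(E_{n,k-1} + \tbinom{n-1}{k-1}\big). \nonumber
\end{eqnarray}
I would expand both sides and compare term by term against (i) the log-concavity of the Eulerian numbers, $E_{n,k}^2 \geq E_{n,k-1}E_{n,k+1}$, (ii) the log-concavity of the binomial row $\binom{n-1}{k}$, and (iii) cross terms of the form $E_{n,k}\binom{n-1}{k}$ versus $E_{n,k\pm 1}\binom{n-1}{k\mp 1}$. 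The cross terms are the crux: one needs something like a "ratio-alternating" comparison, $\frac{E_{n,k}}{E_{n,k-1}} \geq \frac{\binom{n-1}{k}}{\binom{n-1}{k-1}}$ in the appropriate range (and its mirror), which should follow from the fact that the Eulerian sequence is "more spread out" / flatter near its peak than the binomial sequence — this is presumably exactly what the paper's notion of ratio-alternating is designed to capture, so I would invoke or prove that lemma here.

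The main obstacle I anticipate is handling the cross terms $E_{n,k}\binom{n-1}{k}$ cleanly, and in particular the boundary cases $k=1$ and $k=n-2$ where $E_{n,0}=1$, $\binom{n-1}{0}=1$ force $\min\{P_{n,1},Q_{n,1}\}$ and the adjacent values to be small, so the inequality is tightest there; a direct check using $E_{n,1} = 2^n - n - 1$ and $E_{n,n-2} = E_{n,1}$ together with $\binom{n-1}{1} = n-1$ should dispatch these. I would also need to be slightly careful about parity: the alternating sign $(-1)^k$ in the binomial correction means that for each fixed $n$ the pattern of "which of $P,Q$ is larger" alternates with $k$, but since the strong synchronisation inequality only ever pairs $\min$ at $k$ with $\max$ at $k\pm 1$, and we are taking genuine max/min, the alternation is automatically absorbed and no case analysis on parity of $k$ is needed beyond the observation above. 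Assuming the ratio-alternating lemma for $(E_{n,k})$ versus $(\binom{n-1}{k})$, the remainder is a finite sequence of elementary inequalities.
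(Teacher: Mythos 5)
Your reduction is correct and is genuinely different in shape from the paper's argument: since $P_{n,k}+Q_{n,k}=E_{n,k}$ and $|P_{n,k}-Q_{n,k}|=\binom{n-1}{k}$ (the paper's Theorem \ref{thm:differenceofoddandeven}; note the correct sign is $(-1)^k\binom{n-1}{k}$ with no extra $(-1)^{n-1}$, though this does not affect your min/max identification), one always has $\min\{P_{n,k},Q_{n,k}\}=\tfrac12\bigl(E_{n,k}-\binom{n-1}{k}\bigr)$ and $\max\{P_{n,k},Q_{n,k}\}=\tfrac12\bigl(E_{n,k}+\binom{n-1}{k}\bigr)$, so strong synchronisation is exactly the single inequality $\bigl(E_{n,k}-\binom{n-1}{k}\bigr)^2\geq\bigl(E_{n,k+1}+\binom{n-1}{k+1}\bigr)\bigl(E_{n,k-1}+\binom{n-1}{k-1}\bigr)$. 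That is a clean, non-inductive reformulation.

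The gap is that you do not prove this inequality, and the tools you cite cannot. Expanding, you need
\begin{equation*}
\Bigl(E_{n,k}^2-E_{n,k+1}E_{n,k-1}\Bigr)+\Bigl(\tbinom{n-1}{k}^2-\tbinom{n-1}{k+1}\tbinom{n-1}{k-1}\Bigr)\;\geq\;2E_{n,k}\tbinom{n-1}{k}+E_{n,k+1}\tbinom{n-1}{k-1}+E_{n,k-1}\tbinom{n-1}{k+1},
\end{equation*}
where every cross term on the right is positive and there is nothing on the left to cancel them against. Plain log-concavity of $(E_{n,k})$ only gives $E_{n,k}^2-E_{n,k+1}E_{n,k-1}\geq 0$; what you actually need is a \emph{quantitative} lower bound on this defect of order $E_{n,k}\binom{n-1}{k}$, which is a strictly stronger statement than log-concavity and is precisely the hard content of the theorem. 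The ratio comparison $E_{n,k}/E_{n,k-1}\geq\binom{n-1}{k}/\binom{n-1}{k-1}$ you invoke only lets you reorder cross products such as $E_{n,k+1}\binom{n-1}{k-1}$ versus $E_{n,k}\binom{n-1}{k}$; it produces no lower bound on $E_{n,k}^2-E_{n,k+1}E_{n,k-1}$ at all. (Something like Newton's inequalities from real-rootedness of $A_n(t)$ could conceivably supply such a bound, but you neither invoke nor verify this, and the boundary cases would still need care.) The paper supplies exactly this missing quantitative estimate by a different route: induction on $n$ via Mantaci's recurrences (Lemma \ref{recurences of Mantaci}), decomposing $P_{n,k}^2-P_{n,k+1}P_{n,k-1}$ into nine terms and showing the single negative term $T_7$ is absorbed by $T_1+T_5$ using the inductive strong synchronisation of $P_{n-1}$ and $Q_{n-1}$ together with the binomial identity. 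Until you establish the displayed inequality by some such argument, your proposal reformulates the theorem rather than proving it.
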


The sum of two log-concave sequences need not be log-concave. But 
{Gross \etal} in \cite[Theorem 2.3]{mansourgross4authorcombinationoflogconcavity}
 showed that the 
sum of two synchronised sequences 
is log-concave. 
Hence Theorem \ref{thm:MainresultforevenlogconcaveTypeA} refines 
the log-concavity of $A_{n,k}$. We generalize our results to the case when excedances are summed over the elements with positive sign in Type B Coxeter Groups. Let $\BB_n$ be the set of permutations $\pi$ of $\{-n, -(n-1),
\ldots, -1, 1, 2, \ldots n\}$ 
satisfying $\pi(-i) = -\pi(i)$.  
$\BB_n$ is referred to as the hyperoctahedral group or the group of 
signed permutations on $[n]$ and $|\BB_n| = 2^n n!$. 
We use Brenti's \cite{brenti-q-eulerian-94} definition for Type B excedance and define the 
excedance polynomials of Type B.
There is a natural
notion of length in these groups and we get results when excedance 
enumeration is restricted
to elements with even length. For Type B Coxeter 
Groups, our main result is Theorem
\ref{thm:MainresultforevenlogconcaveTypeB}. 
\begin{theorem} 
	\label{thm:MainresultforevenlogconcaveTypeB}
	For positive integers $n$, the sequences  
	$P^B_n= (P^B_{n,k})_{k=0}^n$ and $Q^B_{n}=(Q^B_{n,k})_{k=0}^n$ are strongly 
	synchronised.  
\end{theorem}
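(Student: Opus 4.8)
The plan is to mirror the Type A argument (Theorem \ref{thm:MainresultforevenlogconcaveTypeA}) in the hyperoctahedral setting, so first I would pin down the two ingredients on which that argument rests: an explicit control of $\EE_n(t) = \sum_k E^B_{n,k}t^k$ (the full Type B excedance polynomial, known to be real-rooted and hence log-concave with well-understood coefficients), and a manageable formula or recurrence for the ``defect'' polynomial $\EE_n^+(t) - \EE_n^-(t) = \sum_\pi (-1)^{\ell(\pi)} t^{\exc_B(\pi)}$, the signed excedance enumerator over $\BB_n$. Writing $P^B_{n,k} = \tfrac12(E^B_{n,k} + D^B_{n,k})$ and $Q^B_{n,k} = \tfrac12(E^B_{n,k} - D^B_{n,k})$ where $D^B_{n,k}$ is the coefficient of the signed enumerator, strong synchronisation of $(P^B_{n,k})$ and $(Q^B_{n,k})$ reduces to an inequality relating $E^B_{n,k}$, $E^B_{n,k\pm1}$ and the (much smaller) quantities $D^B_{n,k}$, $D^B_{n,k\pm1}$. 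The first step, then, is to establish the signed-excedance identity for $\BB_n$ — I expect $\sum_\pi (-1)^{\ell(\pi)} t^{\exc_B(\pi)}$ to collapse to something very simple (a monomial times a small polynomial, as happens for $\SSS_n$ via a sign-reversing involution pairing most permutations), and the length statistic $\ell$ on $\BB_n$ decomposes as $\inv$ plus a ``negative coordinate'' contribution, which should make the involution transparent.

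Second, I would set up the strong-synchronisation inequality \eqref{eqn:defnstrongsynchro} directly. Since $Q^B_{n,k} \le P^B_{n,k}$ is expected (the sign defect is nonnegative in the relevant range, exactly as $Q_{n,k} \le P_{n,k}$ in Type A for the example given), we have $\min\{P^B_{n,k},Q^B_{n,k}\} = Q^B_{n,k}$ and $\max\{P^B_{n,k\pm1},Q^B_{n,k\pm1}\} = P^B_{n,k\pm1}$, so the target becomes $(Q^B_{n,k})^2 \ge P^B_{n,k-1}P^B_{n,k+1}$, i.e. $(E^B_{n,k} - D^B_{n,k})^2 \ge (E^B_{n,k-1}+D^B_{n,k-1})(E^B_{n,k+1}+D^B_{n,k+1})$. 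Expanding and using the known log-concavity $E^B_{n,k}{}^2 \ge E^B_{n,k-1}E^B_{n,k+1}$, this reduces to showing that a combination of cross terms of the form $E^B_{n,k}D^B_{n,k}$ versus $E^B_{n,k\pm1}D^B_{n,k\mp1}$ has the right sign — precisely the ``ratio-alternating'' phenomenon the abstract advertises. I would formulate and prove a clean abstract lemma (the Type B analogue of whatever ratio-alternating lemma drives the Type A proof): if $(E_k)$ is log-concave and the defect sequence $(D_k)$ is supported near the centre with small magnitude and alternating-ratio behaviour relative to $(E_k)$, then $(\tfrac12(E_k\pm D_k))$ are strongly synchronised.

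Third comes the verification that the pair $(E^B_{n,k}, D^B_{n,k})$ actually satisfies the hypotheses of that abstract lemma. This is where the real computation lives: one needs the exact form of $D^B_{n,k}$ from step one, a sharp comparison of consecutive $E^B_{n,k}$ (for which the explicit product formula for Type B Eulerian numbers, or the recurrence $E^B_{n,k} = (2k+1)E^B_{n-1,k} + (2n-2k+1)E^B_{n-1,k-1}$, is available), and then a finite set of term-by-term inequalities. Small cases $n=1,2,3$ should be checked by hand to seed an induction if the general bound is proven recursively. I would expect the main obstacle to be exactly this last verification: controlling the cross terms $E^B_{n,k}D^B_{n,k-1}$ against $E^B_{n,k-1}D^B_{n,k}$ near the boundary of the support of $D^B$, where log-concavity of $(E^B_{n,k})$ alone is not quite enough and one has to exploit the ratio monotonicity $E^B_{n,k-1}/E^B_{n,k}$ quantitatively. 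A secondary subtlety is confirming the sign convention (that $Q^B \le P^B$ throughout, so that the $\min$/$\max$ in \eqref{eqn:defnstrongsynchro} resolve as claimed); if instead the roles swap at the extreme indices, those few boundary indices must be handled separately, but there $D^B_{n,k\pm1}$ typically vanishes and the inequality degenerates to the already-known log-concavity of $E^B$.
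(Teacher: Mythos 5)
There is a genuine gap, and it occurs at the very step where you resolve the $\min$/$\max$ in \eqref{eqn:defnstrongsynchro}. The defect sequence is $D^B_{n,k}=P^B_{n,k}-Q^B_{n,k}=(-1)^k\binom{n}{k}$ (Theorem \ref{thm:signedexcbtype}), which \emph{alternates in sign}: $P^B_{n,k}\ge Q^B_{n,k}$ only for even $k$, while $Q^B_{n,k}\ge P^B_{n,k}$ for odd $k$. (Your claim that ``$Q_{n,k}\le P_{n,k}$ in Type A for the example given'' is already false for that example: $P_{5,1}=11<15=Q_{5,1}$.) Consequently the target is not $(Q^B_{n,k})^2\ge P^B_{n,k-1}P^B_{n,k+1}$ for all $k$; the correct resolution is $(Q^B_{n,k})^2\ge Q^B_{n,k-1}Q^B_{n,k+1}$ for even $k$ and $(P^B_{n,k})^2\ge P^B_{n,k-1}P^B_{n,k+1}$ for odd $k$ (this is exactly the content of Theorem \ref{thm:relationbetweenratioaltandstrongsynchro} combined with Corollary \ref{cor:ratioalternatingtypeb}). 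Since at odd indices $Q^B\ge P^B$ and at even indices $P^B\ge Q^B$, the inequality you propose to prove is \emph{strictly weaker} than strong synchronisation at every $k$, so establishing it would not prove the theorem.

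A second, independent problem is that even after correcting the parity bookkeeping, your reduction ``known log-concavity of $E^B_{n,k}$ plus control of cross terms'' does not close. Writing the corrected target for odd $k$ as $\bigl(E^B_{n,k}-\binom{n}{k}\bigr)^2\ge\bigl(E^B_{n,k-1}+\binom{n}{k-1}\bigr)\bigl(E^B_{n,k+1}+\binom{n}{k+1}\bigr)$ (up to the factor $\tfrac14$), one needs the quantitative lower bound
\begin{equation*}
(E^B_{n,k})^2-E^B_{n,k-1}E^B_{n,k+1}\;\ge\;2E^B_{n,k}\binom{n}{k}+E^B_{n,k-1}\binom{n}{k+1}+E^B_{n,k+1}\binom{n}{k-1}+\binom{n}{k-1}\binom{n}{k+1}-\binom{n}{k}^2,
\end{equation*}
whose dominant term $2E^B_{n,k}\binom{n}{k}$ is large; mere log-concavity of the Type B Eulerian numbers gives nothing here, and you do not supply the estimate. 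The paper sidesteps this entirely: it inducts on $n$ using the coupled recurrences $P^B_{n,k}=2kQ^B_{n-1,k}+(2n-2k+1)Q^B_{n-1,k-1}+P^B_{n-1,k}$ and its mate (Lemma \ref{lemma:rec_exc2_type_b}), expands $(P^B_{n,k})^2-P^B_{n,k+1}P^B_{n,k-1}$ into nine terms (Lemma \ref{lemma:ti_B}), and bounds the single negative term using the inductive strong synchronisation of $P^B_{n-1}$ and $Q^B_{n-1}$ together with the binomial identity. If you want to salvage your route, the missing piece is precisely a proof of the displayed inequality for the Type B Eulerian numbers, which is not easier than the theorem itself.
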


We organize this paper as 
follows.
In Section \ref{section:basicpropertiesstrong},
we state and prove some basic properties of  strongly synchronised 
sequences. In Section 
\ref{section:strongsynchronisationoftypeAexcedance} we prove Theorem \ref{thm:MainresultforevenlogconcaveTypeA}. In Section 
\ref{sec:typeB} we prove Theorem \ref{thm:MainresultforevenlogconcaveTypeB}. In Section $\ref{sec:sagan}$, we modify a Theorem of Sagan to prove log-concavity of some combinatorial sequences directly.

\comment{
\section{Sagan's Theorem Modified and Applications}\label{section:saganmodified}

	\begin{theorem}\cite[Theorem 1]{saganinductivelogconcavity}
		\label{thm:sagan}
		Suppose that for $n \geq 1$ and $0 \leq k \leq n$, 
		a non-negative integral sequence $t_{n,k}$ 
		satisfies the following triangular recurrence relation: $t_{n,k}=c_{n,k}t_{n-1,k-1}+d_{n,k}t_{n-1,k}$ where the 
		multiplicative coefficients $c_{n,k},d_{n,k}$ are all 
		non-negative integers and $t_{a,b}=0$ whenever $a < b$. 
		Suppose the following conditions hold: 
		(i) For all positive integers $n$, $c_{n,k}$ and $d_{n,k}$ are 
		log-concave in $k$.  
		(ii)$c_{n,k-1}d_{n,k+1}+c_{n,k+1}d_{n,k-1} \leq 2c_{n,k}d_{n,k}$ 
		for all $n \geq 1$ and $0 \leq k \leq n$. 
		Then, for all positive integers $n$, the sequence $t_{n,k}$ is 
		log-concave in $k$.
	\end{theorem}
	
	This theorem has some nice applications. This directly gives the 
	log-concavity of binomial coefficients and Stirling Number of both 
	kinds. But this does not 
	directly prove the log-concavity of Eulerian Numbers 
	as (ii) is not satisfied. 
	Hence we modify Sagan's theorem. The proof of Theorem 
	\ref{thm:modifiedSagan} goes along the same line as the original 
	proof 
	of Theorem \ref{thm:sagan}
	but we give it for completeness. 
	
	\begin{theorem}
		\label{thm:modifiedSagan}
		Suppose that for $n \geq 1$ and $0 \leq k \leq n$, 
		a non-negative integral sequence $t_{n,k}$ 
		satisfies the following triangular recurrence relation: $t_{n,k}=c_{n,k}t_{n-1,k-1}+d_{n,k}t_{n-1,k}$ where 
		$c_{n,k},d_{n,k}$ are all non-negative integers and 
		$t_{a,b}=0$ whenever $a < b$. Suppose the following 
		conditions hold:\\
		(i) For all positive integers $n$, $c_{n,k}$ and $d_{n,k}$ are 
		log-concave in $k$.  \\
		(ii) $2 \sqrt{(c_{n,k}^2-c_{n,k+1}c_{n,k-1})
			(d_{n,k}^2-d_{n,k+1}d_{n,k-1}) }
		\geq c_{n,k-1}d_{n,k+1}+c_{n,k+1}d_{n,k-1}-2c_{n,k}d_{n,k}$ 
		for all $n \geq 1$ and all $0 \leq k \leq n$. \\
		Then, for 
		all positive integers $n$, the sequence $t_{n,k}$ is log-concave 
		in $k$.
	\end{theorem}
	
	\begin{proof}
		We prove this by induction. Assume $t_{n-1,k}$ to be log-concave 
		in $k$. 
		\begin{eqnarray*}
			t_{n,k}^2 - t_{n,k+1}t_{n,k-1}
			&= & c_{n,k}^2
			t_{n-1,k-1}^2 -c_{n,k+1}c_{n,k-1}t_{n-1,k}t_{n-1,k-2} + d_{n,k}^2
			t_{n-1,k}^2
			\\ & &  -d_{n,k+1}d_{n,k-1}t_{n-1,k+1}t_{n-1,k-1} +
			2c_{n,k}d_{n,k}t_{n-1,k-1}t_{n-1,k}
			\\ & & 
			-c_{n,k+1}d_{n,k-1}t_{n-1,k-1}t_{n-1,k}- c_{n,k-1}d_{n,k+1}t_{n-1,k-2}t_{n-1,k+1}
			\\& \geq  & (c_{n,k}^2 - c_{n,k+1}c_{n,k-1} )t_{n-1,k-1}^2 + 
			(d_{n,k}^2 - d_{n,k+1}d_{n,k-1} )t_{n-1,k}^2
			\\ & &
			+(2c_{n,k}d_{n,k}-c_{n,k-1}d_{n,k+1}-c_{n,k+1}d_{n,k-1})
			t_{n-1,k-1}t_{n-1,k}
		\end{eqnarray*}
		By A.M-G.M inequality $Ax^2+By^2  \geq 2 \sqrt{AB}xy $ whenever 
		$A$ and $B$ are nonnegative. Let, $A=c_{n,k}^2 - c_{n,k+1}c_{n,k-1}$ 
		and $B=d_{n,k}^2 - d_{n,k+1}d_{n,k-1}$. Then $A$ and $B$ are 
		nonnegative due to log-concavity of $c_{n,k}$ and $d_{n,k}$ 
		respectively. Here,
		\begin{eqnarray}
		2\sqrt{AB} & = & \sqrt{4(c_{n,k}^2-c_{n,k+1}c_{n,k-1})(d_{n,k}^2-d_{n,k+1}d_{n,k-1})} \nonumber \\
		& \geq &
		(c_{n,k-1}d_{n,k+1}+c_{n,k+1}d_{n,k-1}-2c_{n,k}d_{n,k}) \nonumber 
		\end{eqnarray}
		Hence, $t_{n,k}^2 - t_{n,k+1}t_{n,k-1}$ is nonnegative 
		and so we are done. 
	\end{proof}
	\begin{remark}
		Note that, Theorem \ref{thm:modifiedSagan} is more general that 
		Theorem \ref{thm:sagan} because if the multiplicative coeffcients 
		$c_{n,k}$ and $d_{n,k}$ satisfy (ii) of Theorem \ref{thm:sagan}, 
		then they certainly  satisfy (ii) of Theorem \ref{thm:modifiedSagan}. 
	\end{remark}

	\subsection{Direct Applications of Modified Sagan's Theorem}
	
	\begin{enumerate}
		\item This immediately proves the log-concavity of 
		Eulerian Numbers. We know from
		\cite[Theorem 1.3]{petersen-eulerian-nos-book} that $A_{n,k}$ 
		satisfy the following recurrence:
		$A_{n,k}=(k+1)A_{n-1,k}+
		(n-k)A_{n-1,k-1}$.  It is easy to see that both  $c_{n,k}=k+1$ and 
		$d_{n,k}=(n-k)$ are log-concave in $k$. Further, 
		$$2\sqrt{(c_{n,k}^2-c_{n,k+1}c_{n,k-1})(d_{n,k}^2-d_{n,k+1}d_{n,k-1})}= 
		2 \geq 2= (c_{n,k-1}d_{n,k+1}+c_{n,k+1}d_{n,k-1}-2c_{n,k}d_{n,k}).$$ 
		Hence $A_{n,k}$ is log-concave. 
		
		\item Let $B_n(t)= \sum_{\pi \in \BB_n}t^{\des_B(\pi)} =
		\sum_{k=0}^n B_{n,k}t^{\des_B(\pi)}$   be the Type-B 
		Eulerian Polynomials. From \cite{brenti-q-eulerian-94}, we get 
		that they satisfy the following recurrence:
		$B_{n,k}=(2k+1)B_{n-1,k}+
		[2(n-k)+1]B_{n-1,k-1}$. Taking 
		$c_{n,k}=2k+1$ and $d_{n,k}=2(n-k)+1$ works here as both 
		of them are log-concave and 
		$$2\sqrt{(c_{n,k}^2-c_{n,k+1}c_{n,k-1})(d_{n,k}^2-d_{n,k+1}d_{n,k-1})}= 
		8 \geq 8= (c_{n,k-1}d_{n,k+1}+c_{n,k+1}d_{n,k-1}-2c_{n,k}d_{n,k}).$$ 
		
		\item The Eulerian polynomials are known to be 
		gamma positive from \cite{foata-schutzenberger-eulerian}. 
		Let $T_{n,k}$ be the coefficient of $t^{2k}(1+t)^{n-1-2k}$ 
		in $A_n(t)$. $T_{n,k}$ is actually the number of elements 
		in $S_n$ with $k$ descents and no double descents. From 
		\cite{foata-schutzenberger-eulerian},
		we get that  these gamma coefficients satisfy the following 
		recurrence: $T_{n,k}=(k+1)T_{n-1,k}+(2n-4k)T_{n-1,k-1}$. Taking $c_{n,k}=(k+1)$ 
		and $d_{n,k}=2n-4k$, we can get that for any $n$, 
		the gamma coefficients of Classical Eulerian Polynomial are 
		log-concave. 
		
		\item The Type B Eulerian polynomial is defined as
		$B_n(t)= \sum_{\pi \in \BB_n}t^{\des_B(\pi)}$. Chow in 
		\cite[Theorem 4.7]{chow-certain_combin_expansions_eulerian} 
		proved that $B_n(t)= \sum_{s=0}^{\floor{n/2}}b_{n,k}t^k(1+t)^{n-2k}$ 
		where 
		$b_{n,k}$ satisfies the following recurrence:
		$b_{n,k}=(2k+1)b_{n-1,k}+4(n+1-2k)b_{n-1,k-1}$. We can take  
		$c_{n,k}=(2k+1)$ and $d_{n,k}=4(n+1-2k)$ and that proves the 
		log-concavity of $b_{n,k}$. 
		
		\item Let $Q_n$ be the set of permutations of $\{1,1,2,2,\ldots,n,n\}$ 
		such that $\forall i$, 
		entries between two occurences of $i$ are larger than $i$. 
		Let $\DescSet(\pi) = \{i \in [2n-1]: \pi_i > \pi_{i+1} \}$ and 
		$\des(\pi) = |\DescSet(\pi)|$ be its number of descents. 
		Let $E_{n,k}= |\{\pi \in Q_n: \des(\pi)=k\}|$. From \cite{haglund-visontai-stable-multivariate}, we get that these 
		coefficients $E_{n,k}$ satisfy the following recurrence: 
		$E_{n,k}=kb_{n-1,k}+(2n-k)b_{n-1,k-1}$. Taking 
		$c_{n,k}=k$ and $d_{n,k}=2n-k$, we get the log-concavity of 
		$E_{n,k}$ for all positive integers $n$. 
		
	\end{enumerate}
}

\section{ Properties of strong synchronisation}
\label{section:basicpropertiesstrong}

Recall the sequence $A_{n,k}$ from  Section 
\ref{sec:intro}. Let $\Der_n$ be the set of derangements 
in $\SSS_n$ and let $D_{n,k}= |\{\pi \in \Der_n : \des(\pi)=k\}|$.
Consider the following sequences:
\begin{eqnarray}
\label{seqexample}
(A_{6,0},A_{6,1},A_{6,2},A_{6,3},A_{6,4},A_{6,5}) =  
(1,57,302,302,57,1) \nonumber  \\
(D_{6,0},D_{6,1},D_{6,2},D_{6,3},D_{6,4},D_{6,5}) =  
(0,16,104,120,24,1) \nonumber
\end{eqnarray}
Then it can be checked that these two sequences are 
synchronised in $k$  but not strongly synchronised in $k$  as 
$D_{6,1}^2=16^2 \leq 302=A_{6,0}A_{6,2}$.

By definition, clearly the  strong synchronisation relation 
is symmetric  but neither reflexive nor transitive. Consider the 
following 
example:
$$A=(1,4,5), \hspace{8 mm} B=(1,5,10), \hspace{8 mm} C=(1,6,25). $$ 
Here, $A \approx B$ and $B\approx C$ but $A$ and $C$ are not strongly 
synchronised. Moreover, note that $A$ and $C$ are not even 
synchronised. We also note that for any log-concave sequence $A$,
 we have $A \approx A$, but for $2$ different scalars $\lambda$ and $\mu$,  $\lambda A $ and $ \mu A$ may, or may not be strongly synchronised.  
The following is a useful result which gives a nice connection 
between the strong synchronisation of two sequences $A$ and $B$ and  
log-concavity of all the sequences in $S(A,B)$. 

\begin{theorem}
	\label{thm:strongsynchroandlog}
	Two non-negative sequences $A$ and $B$ are strongly synchronised 
	if and only if for all $C \in  S(A,B)$, $C$ is log-concave. 
\end{theorem}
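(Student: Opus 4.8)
The plan is to prove both implications directly; the statement turns out to be a clean ``extremal'' reformulation of log-concavity, and essentially no computation is needed.

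For the forward direction, assume $A \approx B$ and let $C = (c_k)_{k=0}^n \in S(A,B)$ be arbitrary. Fix $k$ with $1 \le k \le n-1$. Since $c_k \in \{a_k,b_k\}$ we have $c_k \ge \min\{a_k,b_k\}$, while $c_{k-1} \le \max\{a_{k-1},b_{k-1}\}$ and $c_{k+1} \le \max\{a_{k+1},b_{k+1}\}$. Combining these with the defining inequality \eqref{eqn:defnstrongsynchro} of strong synchronisation, and using that all terms are non-negative, gives
\[
c_k^2 \;\ge\; (\min\{a_k,b_k\})^2 \;\ge\; \max\{a_{k+1},b_{k+1}\}\cdot\max\{a_{k-1},b_{k-1}\} \;\ge\; c_{k+1}\,c_{k-1},
\]
so $C$ is log-concave at $k$, and hence (as $k$ was arbitrary) log-concave.

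For the converse, assume every $C \in S(A,B)$ is log-concave and fix $k$ with $1 \le k \le n-1$. By the symmetry of the claimed inequality in $A$ and $B$ we may assume $\min\{a_k,b_k\} = a_k$. Now I would build a specific sequence $C \in S(A,B)$ that is ``locally extremal'' at $k$: set $c_k = a_k$, let $c_{k-1}$ be whichever of $a_{k-1}, b_{k-1}$ is larger, let $c_{k+1}$ be whichever of $a_{k+1}, b_{k+1}$ is larger, and set $c_j = a_j$ for every remaining index $j$. This is a legitimate member of $S(A,B)$ because each coordinate is chosen from $\{a_j,b_j\}$, and the indices $k-1, k, k+1$ all lie in $\{0,1,\dots,n\}$ so no coordinate is left undefined. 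By hypothesis $C$ is log-concave, so $c_k^2 \ge c_{k-1}c_{k+1}$, which reads
\[
(\min\{a_k,b_k\})^2 = a_k^2 \;\ge\; \max\{a_{k-1},b_{k-1}\}\cdot\max\{a_{k+1},b_{k+1}\};
\]
as $k$ was arbitrary, $A$ and $B$ are strongly synchronised.

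I do not expect a genuine obstacle here. The only points requiring care are, first, that one is indeed free to prescribe the three consecutive values $c_{k-1}, c_k, c_{k+1}$ independently, which is immediate from the definition of $S(A,B)$; and second, the boundary bookkeeping, namely that for $1 \le k \le n-1$ both $k-1$ and $k+1$ are valid indices, so no degenerate case arises. It is also worth noting in passing that this argument shows one need only test log-concavity of the $2(n-1)$ locally extremal sequences constructed above (one per value of $k$), rather than all $2^{n+1}$ elements of $S(A,B)$.
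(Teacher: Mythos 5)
Your proposal is correct and follows essentially the same route as the paper: the forward direction is the identical three-step chain of inequalities, and the converse uses the same locally extremal sequence (maximum at $k-1$ and $k+1$, minimum at $k$, and $a_j$ elsewhere). The only cosmetic difference is your reduction ``assume $\min\{a_k,b_k\}=a_k$,'' which is unnecessary since one can simply set $c_k=\min\{a_k,b_k\}$ directly, as the paper does.
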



\begin{proof}
	Let  $A$ and $B$ be strongly synchronised and $C \in
	 S(A,B)$. 
	Then $$c_k^2 \geq (\min\{a_k,b_k\})^2 \geq \max\{a_{k+1},b_{k+1}\}.\max \{ a_{k-1},b_{k-1}\} \geq c_{k+1}c_{k-1}.$$ Hence, $C$ is log-concave.
	
	Conversely, let $A$ and $B$ be two non-negative sequences such 
	that for all $C \in S(A,B)$, $C$ is log-concave.
	 We fix $k$ and 
	we need to show 
	\begin{align}
	\label{eqn:minmax}
	 (\min\{a_k,b_k\})^2 \geq \max\{a_{k+1},b_{k+1}\}.\max \{ a_{k-1},b_{k-1}\}.
	\end{align}
	Let us consider the following sequence $C$ with

	$$c_r=\begin{cases}
	\max\{a_{r},b_{r}\} & \text {if $r=k-1$ and $k+1$ }.\\
	\min\{a_{r},b_{r}\} & \text {if $r=k$ }.\\
	a_{r} & \text {elsewhere  }.
	\end{cases}$$
	
	Then log-concavity of $C$ ensures  \eqref{eqn:minmax}. For each $k$,
	 we can construct such a sequence $C$ whose log-concavity will ensure $\eqref{eqn:minmax}$ and hence, 
	we are done.
\end{proof}

\vspace{3 mm}
Next we consider $l$ non-negative sequences 
$T^1=(T^1_k)_{k=0}^{n},T^2=(T^2_k)_{k=0}^{n}, \ldots, T^l=(T^l_k)_{k=0}^{n}$. Let $S(T^1,T^2,\ldots,T^l)$  be 
the set of sequences $C=( c_k )_{k=0}^n$ such that for each $k$, 
$c_k \in \{ T^1_k,\ldots, T^l_k \}$. $S(T^1,T^2, \ldots, T^l)$ is essentially the set of all $l^{n+1}$ sequences, which can be made up from the given sequences $T^1,T^2, \ldots, T^l$. 

\begin{corollary}
\label{rem:strongsynchroandlog2}
Let $T^1,T^2, \ldots, T^l$ be $l$ non-negative sequences. Suppose $C$ is log-concave for all $C \in S(T^1,T^2,\ldots,T^l)$. Then for all $1 \leq i < j \leq l$, the sequences $T^i$ and $T^j$ are strongly synchronised. 
\end{corollary}

%

As $C$ is log-concave for all $C \in S(T^1,T^2,\ldots,T^l)$, $C$ is log-concave for all $C \in S(T^i,T^j)$. Hence, Corollary \ref{rem:strongsynchroandlog2} follows. Surprisingly,  
the converse of the above statement is not true.  
Consider the following three sequences:
$$T^1=(1,5,3), \hspace{8 mm} T^2=(7,6,3), \hspace{8 mm} T^3=(6,6,4). $$ 
Here, $T^1 \approx T^2$,  $T^2\approx T^3$ and $T^1 \approx T^3$. Consider the sequence $(7,5,4) \in S(T^1,T^2,T^3)$ which is not log-concave. 
It is easy to see that
log-concavity of a sequence $A=(a_k)_{k=0}^n$ with all intermediate terms positive
is equivalent to saying $a_ja_l \geq a_{j-i}a_{l+i}$ for all positive integers $j \leq l$ and $i \leq j$. Thus, Theorem \ref{thm:strongsynchroandlog} gives the following corollary which we
 will need later in Section \ref{section:strongsynchronisationoftypeAexcedance}.

\begin{corollary}
	\label{cor:strongsynchrnointerlacinglogconcave}
	Let, $A=(a_k)_{k=0}^n$ and 
	$B=(b_k)_{k=0}^n$ be 
    two sequences. The following are equivalent: 
    \begin{enumerate}
    \item
    \label{item_first_cor_8}  $A=(a_k)_{k=0}^n$ and 
	$B=(b_k)_{k=0}^n$ are strongly synchronised. 
	
	\item 
	\label{item_second_cor_8}
	 For all $j \leq l$ and for all positive
	integers $i$, we have
	\begin{equation}
	\label{eqn:corollaryjminusl}
    \min\{a_j,b_j\}.\min\{a_l,b_l\} \geq \max\{a_{j-i},b_{j-i}\}. \max\{a_{l+i},b_{l+i}\}.
	\end{equation}
	\end{enumerate}
\end{corollary}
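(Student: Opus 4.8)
The plan is to get both implications out of Theorem~\ref{thm:strongsynchroandlog} combined with the elementary reformulation of log-concavity recalled just above the statement (namely, that for a sequence with positive interior terms, log-concavity is equivalent to $c_jc_l\ge c_{j-i}c_{l+i}$ for all $j\le l$ and $i\le j$). The implication \eqref{item_second_cor_8}$\Rightarrow$\eqref{item_first_cor_8} requires no work: specialising \eqref{eqn:corollaryjminusl} to $j=l=k$ and $i=1$ gives exactly $(\min\{a_k,b_k\})^2\ge \max\{a_{k+1},b_{k+1\}}\cdot\max\{a_{k-1},b_{k-1}\}$ for every $1\le k\le n-1$, which is the definition of strong synchronisation (for $k$ at the boundary the out-of-range term is $0$ and there is nothing to prove).

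For \eqref{item_first_cor_8}$\Rightarrow$\eqref{item_second_cor_8}, assume $A\approx B$, so by Theorem~\ref{thm:strongsynchroandlog} every $C\in S(A,B)$ is log-concave. Fix $j\le l$ and a positive integer $i$; we may assume $j-i\ge 0$ and $l+i\le n$, as otherwise the right-hand side of \eqref{eqn:corollaryjminusl} is $0$ by the usual zero-padding convention. Now I would construct a witness $C=(c_r)_{r=0}^n\in S(A,B)$ by
\[
c_r=\begin{cases}
\max\{a_r,b_r\} & \text{if } r=j-i \text{ or } r=l+i,\\
\min\{a_r,b_r\} & \text{if } r=j \text{ or } r=l,\\
a_r & \text{otherwise,}
\end{cases}
\]
exactly in the spirit of the sequence built in the proof of Theorem~\ref{thm:strongsynchroandlog}. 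Since $i\ge 1$ we have $j-i<j\le l<l+i$, so when $j<l$ these are four distinct positions and when $j=l$ they are three distinct positions; in either case $c_r$ is unambiguously defined and lies in $\{a_r,b_r\}$, so indeed $C\in S(A,B)$. Hence $C$ is log-concave, and the reformulation recalled before the statement yields $c_jc_l\ge c_{j-i}c_{l+i}$, i.e. $\min\{a_j,b_j\}\cdot\min\{a_l,b_l\}\ge \max\{a_{j-i},b_{j-i}\}\cdot\max\{a_{l+i},b_{l+i}\}$, which is \eqref{eqn:corollaryjminusl}. As $j,l,i$ were arbitrary, \eqref{item_second_cor_8} follows.

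The only delicate point is the bookkeeping around the construction of $C$: one must check that the three or four distinguished indices really are pairwise distinct (done above via $i\ge 1$ and $j\le l$), that the remaining entries—taken here to be the $a_r$'s, but any admissible choice works—do not interfere with the conclusion, and that the telescoping reformulation of log-concavity is invoked within its stated hypothesis (interior terms positive), the genuinely out-of-range cases being absorbed into the convention that such terms are $0$. Beyond that the argument is a direct appeal to Theorem~\ref{thm:strongsynchroandlog}, so I expect no substantive obstacle.
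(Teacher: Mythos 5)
Your proposal is correct and follows essentially the same route as the paper: the reverse implication by specialising to $j=l$ and $i=1$, and the forward implication by invoking Theorem~\ref{thm:strongsynchroandlog} together with the telescoping reformulation of log-concavity stated just before the corollary. The only difference is that you spell out the explicit witness sequence $C\in S(A,B)$, which the paper leaves implicit; this is a harmless (indeed helpful) elaboration rather than a different argument.
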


\begin{proof}
We prove the forward implication at first. 
Let $A=(a_k)_{k=0}^n$ and $B=(b_k)_{k=0}^n$ be strongly synchronised. Thus, by Theorem \ref{thm:strongsynchroandlog}, any sequence $C \in S(A,B)$ is log-concave and hence, we are done.
The other direction follows
 by setting $j=l$ and $i=1$ in \eqref{eqn:corollaryjminusl}.
\end{proof}

\subsection{Ratio-Alternating Sequences}
\label{subsec:ratio-alt}

Gross 
\etal \hspace {.2 mm} introduced the {\sl ratio-dominance}  relation 
between two sequences in \cite{mansourgross4authorcombinationoflogconcavity}. Then they gave several results connecting 
 ratio-dominance and synchronisation. Motivated by those, 
we introduce a similar but different notion of {\sl ratio-alternating} defined as follows:

\begin{definition}
	\label{def:ratio-alt}
	Two non-negative sequences $A=(a_k)_{k=0}^n$ 
	and 
	$B=(b_k)_{k=0}^n$ are said to be ratio-alternating 
	if they
	satisfy either 
	
	\vspace{-4 mm}
	
	\begin{eqnarray}
	\label{eqn:ratioalt1}
	a_{2i} \leq b_{2i} \hspace{3 mm} \forall \hspace{2 mm}  
	0 \leq 2i \leq n  \hspace{3 mm} \mbox{and} \hspace{3 mm}  
	a_{2i+1} \geq b_{2i+1}
	\hspace{3 mm} \forall \hspace{2 mm}  0 \leq 2i+1 \leq n ,
	\end{eqnarray}
	
	\vspace{-2 mm}
	
	\
	or 
	
	\vspace{-6 mm}
	
	\begin{eqnarray}
	\label{eqn:ratioalt2}
	a_{2i} \geq b_{2i} \hspace{3 mm} \forall \hspace{2 mm}  
	0 \leq 2i \leq n  \hspace{3 mm} \mbox{and} \hspace{3 mm}  
	a_{2i+1} \leq b_{2i+1}
	\hspace{3 mm} \forall \hspace{2 mm}  0 \leq 2i+1 \leq n.
	\end{eqnarray}

\end{definition}

The relation  ratio-alternating  is reflexive and symmetric 
but not transitive.  Consider the following 
example: $$A=(1,5,7), \hspace{8 mm} B=(3,4,10), \hspace{8 mm} C=(2,6,8).$$ 
Here, $A$ and $B$ are ratio-alternating, $B$ and $C$ are also ratio-alternating
but $A$ and $C$ are not ratio-alternating. We need the following two definitions for the next Theorem.

\begin{definition}
\label{defn:even_log-concave}
A sequence $A=(a_k)_{k=0}^n$ is said 
to be  even log-concave (respectively, odd log-concave) if we have
$ a_{i} ^2 \geq a_{i-1} a_{i+1}$ for $i$ even (respectively, $i$ odd) and $1 \leq i \leq n-1$. 
\end{definition}


\vspace{- 4 mm}

\begin{theorem}
	\label{thm:relationbetweenratioaltandstrongsynchro}

	Let $A=( a_k )_{k=0}^n$ and $B=( b_k )_{k=0}^n$ be two 
	non-negative sequences which satisfy \eqref{eqn:ratioalt1}. 
	Then the following statements are equivalent: 
	\begin{enumerate} 
		\item  
		\label{itm:first}
		$A$ is even log-concave and $B$ is odd log-concave.
		\item
		\label{itm:third} $A$ and $B$ are strongly synchronised. 
	\end{enumerate}	
	In a similar manner, if $A$ and $B$ be two non-negative 
	sequences which satisfy \eqref{eqn:ratioalt2}, 
	then the following are equivalent: 
	\begin{enumerate} 
		\item  
		\label{itm:second}
		$A$ is odd log-concave and $B$ is even log-concave.
		\item
		\label{itm:fourth} $A$ and $B$ are strongly synchronised. 
	\end{enumerate}
	
\end{theorem}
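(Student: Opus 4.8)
The plan is to split the proof as follows. The reverse implication $(\ref{itm:third}) \Rightarrow (\ref{itm:first})$ is immediate: by Definition \ref{def:ss}, strong synchronisation already forces both $A$ and $B$ to be log-concave, and full log-concavity of $A$ (resp.\ $B$) trivially entails even log-concavity of $A$ (resp.\ odd log-concavity of $B$), as in Definition \ref{defn:even_log-concave}. So all the content lies in the forward implication $(\ref{itm:first}) \Rightarrow (\ref{itm:third})$, and I would handle it by a direct case analysis on the parity of the index.

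Concretely, assume $A$ is even log-concave, $B$ is odd log-concave, and $A,B$ satisfy \eqref{eqn:ratioalt1}. Fix $k$ with $1 \leq k \leq n-1$ and verify the strong-synchronisation inequality \eqref{eqn:defnstrongsynchro} at this $k$. If $k$ is even, then $k-1$ and $k+1$ are odd, so \eqref{eqn:ratioalt1} gives $a_k \leq b_k$ while $a_{k-1} \geq b_{k-1}$ and $a_{k+1} \geq b_{k+1}$; hence $\min\{a_k,b_k\} = a_k$, $\max\{a_{k-1},b_{k-1}\} = a_{k-1}$, and $\max\{a_{k+1},b_{k+1}\} = a_{k+1}$, so \eqref{eqn:defnstrongsynchro} reduces exactly to $a_k^2 \geq a_{k-1}a_{k+1}$, an instance of even log-concavity of $A$. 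If $k$ is odd, then $k-1$ and $k+1$ are even, and \eqref{eqn:ratioalt1} now gives $b_k \leq a_k$, $b_{k-1} \geq a_{k-1}$, $b_{k+1} \geq a_{k+1}$, so the relevant minimum at $k$ and both maxima at $k\pm1$ are realised by $B$, and \eqref{eqn:defnstrongsynchro} reduces to $b_k^2 \geq b_{k-1}b_{k+1}$, an instance of odd log-concavity of $B$. Running over all $k$ gives $A \approx B$. For the second pair of equivalences, I would simply observe that swapping the names $A$ and $B$ converts \eqref{eqn:ratioalt2} into \eqref{eqn:ratioalt1} and converts ``$A$ odd log-concave, $B$ even log-concave'' into ``$B$ odd log-concave, $A$ even log-concave'', while strong synchronisation is a symmetric relation; thus this case reduces verbatim to the one just treated.

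There is no genuine obstacle here; the argument is essentially bookkeeping. The one point that needs care is the alignment built into \eqref{eqn:ratioalt1} between the parity of an index and which of $a$ or $b$ dominates there (even index: $a \leq b$; odd index: $a \geq b$): it is precisely this pattern that makes the minimum at $k$ and both maxima at $k\pm1$ come from the \emph{same} sequence, so that a single instance of even- or odd-log-concavity settles \eqref{eqn:defnstrongsynchro} for that $k$.
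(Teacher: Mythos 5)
Your proposal is correct and follows essentially the same route as the paper: the reverse implication via the observation that strong synchronisation forces full (hence even/odd) log-concavity, and the forward implication by a parity case analysis showing that \eqref{eqn:ratioalt1} makes the minimum at $k$ and both maxima at $k\pm1$ come from the same sequence, reducing \eqref{eqn:defnstrongsynchro} to a single even- or odd-log-concavity inequality. The paper likewise disposes of the \eqref{eqn:ratioalt2} case by symmetry, so there is nothing to add.
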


\begin{proof}
	We consider only the first case, that is, when $A$ and $B$ satisfy \eqref{eqn:ratioalt1}. 
	It is easy to see that \ref{itm:third} implies 
	\ref{itm:first}. 
	We prove \ref{itm:first} implies 
	\ref{itm:third}. 
	Assume that $A$ is even log-concave and $B$ is odd log-concave. 
	Then for even $k$, we have
	\begin{align}
	(\min\{a_k,b_k\})^2= a_k^2 \geq a_{k+1}a_{k-1} = \max\{a_{k+1},b_{k+1}\}. \max\{a_{k-1},b_{k-1}\} . 
	\end{align}
	The first equality follows as $A$ and $B$ satisfy 
	\eqref{eqn:ratioalt1}. The inequality follows as $A$ is even log-concave. The last inequality also follows from 
	\eqref{eqn:ratioalt1}.
	Similarly for $k$ odd, we have 
	\begin{align}
    (\min\{a_k,b_k\})^2=b_k^2 \geq b_{k+1}b_{k-1} = \max\{a_{k+1},b_{k+1}\}. \max\{a_{k-1},b_{k-1}\}. 
	\end{align}
	Hence, $A$ and $B$ are strongly synchronised. 
The proof for the case when $A$ and $B$ satisfy \eqref{eqn:ratioalt2} is identical and hence omitted. 
\end{proof}

\section{Proof of Theorem \ref{thm:MainresultforevenlogconcaveTypeA}}
\label{section:strongsynchronisationoftypeAexcedance}

At first, we mention the following well-known recurrence, satisfied by the Eulerian numbers $A_{n,k}$. For reference, one can see \cite[Theorem 1.3]{petersen-eulerian-nos-book}.

\begin{theorem}
\label{thm:recurrence_classical_eulerian_recurrence}
 For positive integers $n,k$ with $n \geq 2$ and  $0 \leq k \leq n-1$, the numbers $A_{n,k}$ satisfy the following recurrence:
 \begin{eqnarray}
 \label{eqn:recurrence_classical_eulerian_recurrence}
  A_{n,k} & = &(k+1) A_{n-1,k} + (n-k) A_{n-1,k-1}, 
  \end{eqnarray}
 where $A_{1,0}=1$ and $A_{1,1}=0$. 
\end{theorem}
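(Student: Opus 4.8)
The plan is to give the standard combinatorial proof by a ``insert the largest letter'' argument on descents. First I would dispose of the base case: $\SSS_1$ consists of the single word $1$, which has no descent, so $A_{1,0}=1$ and $A_{1,k}=0$ for $k\ge 1$, matching the stated initial conditions. The rest proceeds by induction on $n$.

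For the inductive step, fix $n\ge 2$. I would set up the bijection between $\SSS_n$ and the disjoint union, over $\sigma\in\SSS_{n-1}$, of the $n$ \emph{slots} of $\sigma$ (the $n-2$ internal gaps between consecutive letters, together with the front slot and the back slot): given $\pi\in\SSS_n$, delete the entry $n$ to obtain some $\sigma\in\SSS_{n-1}$, and record the slot of $\sigma$ from which $n$ was removed; the inverse map re-inserts $n$ into that slot. The key observation is that, since $n$ is larger than every other entry, placing $n$ immediately to the \emph{left} of a letter always creates a descent there, while placing it immediately to the \emph{right} of a letter always creates an ascent. Hence inserting $n$ into a slot that is currently a descent of $\sigma$, or into the back slot, leaves $\des$ unchanged, whereas inserting $n$ into a slot that is currently an ascent of $\sigma$, or into the front slot, raises $\des$ by exactly $1$.

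I would then count the $\pi\in\SSS_n$ with $\des(\pi)=k$. Such a $\pi$ comes either from a $\sigma$ with $\des(\sigma)=k$ by insertion into one of its $k$ descent slots or into its back slot --- that is $k+1$ choices, contributing $(k+1)A_{n-1,k}$ --- or from a $\sigma$ with $\des(\sigma)=k-1$ by insertion into a descent-raising slot. Since $\sigma$ has $n$ slots in all, of which $k-1$ are descents and one is the back slot, there are $n-(k-1)-1=n-k$ descent-raising slots, contributing $(n-k)A_{n-1,k-1}$. Summing these two disjoint contributions over all $\sigma\in\SSS_{n-1}$ yields \eqref{eqn:recurrence_classical_eulerian_recurrence}.

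The only delicate point --- and the step I would be most careful with --- is the bookkeeping around the two boundary slots: one must confirm that a length-$(n-1)$ word really has exactly $n$ slots, that the back slot is descent-preserving while the front slot is descent-raising, and that the two counts $k+1$ and $n-k$ remain correct in the edge cases $k=0$ and $k=n-1$, where one of $A_{n-1,k-1},A_{n-1,k}$ vanishes. An alternative, purely algebraic route would be to manipulate the exponential generating function $\sum_{n\ge 0}A_n(t)\,x^n/n!$, or to start from an explicit closed form for $A_{n,k}$ and verify the recurrence by a binomial identity; but the insertion argument is the shortest and is the one I would present.
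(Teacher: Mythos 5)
Your proposal is correct: the slot count ($n$ slots for a word of length $n-1$), the classification of descent-preserving slots (the $k$ descent gaps plus the back slot, giving $k+1$) versus descent-raising slots (the remaining $n-k$), and the edge cases $k=0$ and $k=n-1$ all check out. The paper does not prove this statement at all --- it simply cites \cite[Theorem 1.3]{petersen-eulerian-nos-book} for this well-known recurrence --- and the insertion argument you give is precisely the standard proof found in that reference, so there is nothing to reconcile. One cosmetic remark: no induction on $n$ is actually needed; the insertion bijection establishes the identity directly for each fixed $n\ge 2$.
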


\begin{remark}
\label{remark:boundary_cases_type_a_3}
Setting $k=1$ in Equation \eqref{eqn:recurrence_classical_eulerian_recurrence}, we have $A_{n,1} = 2A_{n-1,1} + n-1$. Thus,  when $n \geq 3$, we have  $A_{n,1} \geq n+1$. 
\end{remark} 

Next, we recall the numbers $P_{n,k}$ and $Q_{n,k}$ from \eqref{eqn:exc_even}.   
The following identity involving $P_{n,k} $ 
and $Q_{n,k}$ was shown by Mantaci (see \cite{mantaci-thesis}, \cite{mantaci-jcta-93}).

\vspace{- 2 mm}

\begin{theorem}[Mantaci]
	\label{thm:differenceofoddandeven}
	For positive integers $n$ and $0 \leq k \leq n-1$, $P_{n,k}$ and $Q_{n,k}$ satisfy the 
	following:

	\begin{align}
	\label{eqn:recurrencesforevenandodd3}
	P_{n,k}-Q_{n,k} & =  (-1)^k\binom{n-1}{k}.
	\end{align}	
\end{theorem}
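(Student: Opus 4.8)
The plan is to read the difference $P_{n,k}-Q_{n,k}$ as the signed excedance count
\[
D_{n,k} \;:=\; \sum_{\substack{\pi\in\SSS_n\\ \exc(\pi)=k}} \sgn(\pi)\;=\;P_{n,k}-Q_{n,k},
\]
and to prove $D_{n,k}=(-1)^k\binom{n-1}{k}$ by induction on $n$, the base case $n=1$ being the triviality $D_{1,0}=\sgn(\id)=1=\binom{0}{0}$. The engine of the induction will be a recurrence for $D_{n,k}$ obtained from the same cycle‑insertion construction that yields the Eulerian recurrence \eqref{eqn:recurrence_classical_eulerian_recurrence}, but now carrying the sign along.

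Concretely, I would use the bijection in which every $\pi\in\SSS_n$ is recovered uniquely from a $\sigma\in\SSS_{n-1}$ in exactly one of two ways. Either (a) $\pi$ is obtained by adjoining $n$ as a fixed point, in which case the number of cycles increases by one, so $\sgn(\pi)=\sgn(\sigma)$ and $\exc(\pi)=\exc(\sigma)$; or (b) one picks $i\in[n-1]$ and splices $n$ into the cycle of $\sigma$ through $i$, i.e.\ $\pi(i)=n$, $\pi(n)=\sigma(i)$, and $\pi=\sigma$ elsewhere, in which case the cycle count is unchanged, so $\sgn(\pi)=-\sgn(\sigma)$. For the excedance bookkeeping in case (b): position $i$ is necessarily an excedance of $\pi$ (as $\pi(i)=n>i$), position $n$ is never an excedance of $\pi$ (as $\pi(n)=\sigma(i)\le n-1$), and every other position keeps its status; hence $\exc(\pi)=\exc(\sigma)+1$ if $i\notin\ExcSa(\sigma)$ and $\exc(\pi)=\exc(\sigma)$ if $i\in\ExcSa(\sigma)$. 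Collecting the contributions — case (a) gives $D_{n-1,k}$; case (b) with $i\in\ExcSa(\sigma)$ contributes $-kD_{n-1,k}$ since there are $k$ such $i$ when $\exc(\sigma)=k$; case (b) with $i\notin\ExcSa(\sigma)$ contributes $-(n-k)D_{n-1,k-1}$ since there are $n-k$ such $i$ when $\exc(\sigma)=k-1$ — yields
\[
D_{n,k} \;=\; (1-k)\,D_{n-1,k}\;-\;(n-k)\,D_{n-1,k-1}.
\]

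With the recurrence in hand, I would substitute the inductive hypothesis $D_{n-1,j}=(-1)^j\binom{n-2}{j}$; pulling out the common factor $(-1)^k$, the claim $D_{n,k}=(-1)^k\binom{n-1}{k}$ reduces to the identity
\[
(1-k)\binom{n-2}{k}+(n-k)\binom{n-2}{k-1}=\binom{n-1}{k},
\]
which follows from Pascal's rule $\binom{n-1}{k}=\binom{n-2}{k}+\binom{n-2}{k-1}$ together with the elementary absorption identity $k\binom{n-2}{k}=(n-1-k)\binom{n-2}{k-1}$ (both sides equal $(n-2)!/((k-1)!\,(n-2-k)!)$). The boundary values $k=0$ and $k=n-1$ also come out correctly from the recurrence.

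I expect the one genuinely delicate step to be the excedance accounting in case (b): one must observe that splicing $n$ in after $i$ creates exactly one new excedance (at $i$) and removes none — it can only promote a former non‑excedance at $i$ — so the net change in $\exc$ is $0$ or $+1$ depending solely on the status of $i$ in $\sigma$, and then count the admissible $i$ precisely. Everything else is routine. As alternative packagings of the same argument one may instead record the coupled recurrences $P_{n,k}=P_{n-1,k}+kQ_{n-1,k}+(n-k)Q_{n-1,k-1}$ and its mirror image for $Q_{n,k}$ (adding them recovers \eqref{eqn:recurrence_classical_eulerian_recurrence}, subtracting them gives the theorem), or state the result as the generating‑function identity $\sum_{\pi\in\SSS_n}\sgn(\pi)\,t^{\exc(\pi)}=(1-t)^{n-1}$.
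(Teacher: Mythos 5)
Your argument is correct: the cycle-insertion bijection is the standard one (remove $n$ from its cycle), the sign bookkeeping $\sgn(\pi)=\sgn(\sigma)$ in case (a) and $-\sgn(\sigma)$ in case (b) is right, the excedance accounting in case (b) is handled carefully and correctly (the new excedance is created exactly at $i$ when $i\notin\ExcSa(\sigma)$, position $n$ is never an excedance, and nothing else changes), and the resulting recurrence $D_{n,k}=(1-k)D_{n-1,k}-(n-k)D_{n-1,k-1}$ together with Pascal's rule and the absorption identity closes the induction. Note, however, that the paper does not prove this statement at all: it is quoted as a theorem of Mantaci (with references to his thesis and JCTA paper), and the paper additionally points to Sivasubramanian's later proof via determinant enumeration. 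So your proposal supplies a self-contained combinatorial proof where the paper only cites one. Your recurrence for $D_{n,k}$ is precisely what one obtains by subtracting the two coupled recurrences of Lemma \ref{recurences of Mantaci} (which the paper also quotes from Mantaci without proof), so in effect your cycle-insertion argument proves that lemma refined by sign and then telescopes it; this is almost certainly the shape of Mantaci's original argument, and it is more elementary than the determinantal route. The equivalent generating-function form $\sum_{\pi\in\SSS_n}\sgn(\pi)\,t^{\exc(\pi)}=(1-t)^{n-1}$ that you mention is exactly the signed-excedance identity underlying Sivasubramanian's proof.
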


Later Sivasubramanian in \cite{siva-exc-det} gave a proof of 
Theorem \ref{thm:differenceofoddandeven} using determinant 
enumeration of suitably defined matrices. In \cite{mantaci-thesis} and \cite{mantaci-jcta-93}, Mantaci showed the following 
recurrences involving $P_{n,k} $ and $Q_{n,k}$. Though Mantaci did 
with anti-excedance enumerator, in an identical manner we can get the 
same recurrences for excedance enumerator. The recurrences are also 
shown by Dey and Sivasubramanian in \cite{siva-dey-gamma_positive_excedances_alt_group_aoc}.  



\begin{lemma}[Mantaci]
	\label{recurences of Mantaci}
	For positive integers $n$ and $0 \leq k \leq n-1$, 
	the coefficients $P_{n,k}$ and $Q_{n,k}$ satisfy the 
	following:
	\begin{align}
	\label{eqn:recurrencesforevenandodd1}
	P_{n,k} & =  kQ_{n-1,k} +(n-k)Q_{n-1,k-1} + P_{n-1,k}, \\
	\label{eqn:recurrencesforevenandodd2}
	Q_{n,k} & =  kP_{n-1,k} +(n-k)P_{n-1,k-1} + Q_{n-1,k}.
	\end{align}
\vspace{-1 mm} where $P_{1,0}=1$ and $Q_{1,0}=0$. 

\end{lemma}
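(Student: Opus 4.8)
The plan is to give a bijective proof in which every $\pi \in \SSS_n$ is obtained from a unique $\sigma \in \SSS_{n-1}$ by inserting the letter $n$, tracking simultaneously how the number of excedances and the sign change; cycle notation makes this cleanest. Concretely, to $\pi \in \SSS_n$ associate either (a) the restriction $\sigma := \pi|_{[n-1]}$ when $\pi(n)=n$, or (b) the pair $(\sigma, j)$, where $j := \pi^{-1}(n) \in [n-1]$ and $\sigma \in \SSS_{n-1}$ is obtained by deleting $n$ from its cycle (so $\sigma(j)=\pi(n)$; if $\pi$ contained the $2$-cycle $(j\ n)$, then $j$ becomes a fixed point of $\sigma$). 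The inverse of (b): given $\sigma \in \SSS_{n-1}$ and $j \in [n-1]$, splice $n$ immediately after $j$, i.e. set $\pi(j)=n$, $\pi(n)=\sigma(j)$, and $\pi(i)=\sigma(i)$ otherwise. I would first verify that this gives a bijection between $\SSS_{n-1} \sqcup (\SSS_{n-1}\times[n-1])$ and $\SSS_n$ (a cardinality check gives $(n-1)!\,n = n!$, and the two constructions are visibly mutually inverse).

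Next I would record two bookkeeping observations. Sign: in case (a) the number of cycles increases by one, so $\sgn(\pi)=\sgn(\sigma)$; in case (b), splicing $n$ into a cycle amounts to multiplication by a transposition, so $\sgn(\pi)=-\sgn(\sigma)$. Excedances: position $n$ is never an excedance of $\pi$ (either $\pi(n)=n$, or $\pi(n)=\sigma(j)\le n-1<n$), and all positions $i\ne j,n$ are unchanged. Hence in case (a), $\exc(\pi)=\exc(\sigma)$. In case (b), position $j$ is always an excedance of $\pi$ since $\pi(j)=n>j$; therefore $\exc(\pi)=\exc(\sigma)$ when $j$ was already an excedance position of $\sigma$, and $\exc(\pi)=\exc(\sigma)+1$ when $\sigma(j)\le j$. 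A permutation $\sigma \in \SSS_{n-1}$ with $\exc(\sigma)=m$ has exactly $m$ excedance positions and $(n-1)-m$ non-excedance positions in $[n-1]$.

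Assembling this for a target $\pi \in \AAA_n$ with $\exc(\pi)=k$: the case-(a) contribution comes from even $\sigma$ with $\exc(\sigma)=k$, giving $P_{n-1,k}$; the case-(b) contributions (which force $\sigma$ to be odd) come either from $\exc(\sigma)=k$ paired with a choice among its $k$ excedance positions, giving $kQ_{n-1,k}$, or from $\exc(\sigma)=k-1$ paired with a choice among its $(n-1)-(k-1)=n-k$ non-excedance positions, giving $(n-k)Q_{n-1,k-1}$. Summing yields \eqref{eqn:recurrencesforevenandodd1}, and interchanging the roles of the even and odd elements of $\SSS_{n-1}$ throughout yields \eqref{eqn:recurrencesforevenandodd2}; the base case is $\SSS_1=\{\id\}$, giving $P_{1,0}=1$ and $Q_{1,0}=0$. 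The one place needing care is the excedance bookkeeping in case (b) — specifically that $j$ is an excedance position of $\sigma$ precisely when $\exc$ is left unchanged, and the resulting counts of admissible $j$'s — together with the check that the insertion map really is a bijection, the $2$-cycle collapsing to a fixed point being the edge case to watch.

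For completeness I would also mention a purely algebraic alternative: from $P_{n,k}+Q_{n,k}=A_{n,k}$, Mantaci's identity $P_{n,k}-Q_{n,k}=(-1)^k\binom{n-1}{k}$ (Theorem~\ref{thm:differenceofoddandeven}), and the Eulerian recurrence \eqref{eqn:recurrence_classical_eulerian_recurrence}, one substitutes the formulas for $P_{n-1,k}$, $Q_{n-1,k}$, $Q_{n-1,k-1}$ into the claimed right-hand side of \eqref{eqn:recurrencesforevenandodd1}; the Eulerian part collapses to $\tfrac12 A_{n,k}$, and, after the elementary identity $k\binom{n-2}{k}=(n-1-k)\binom{n-2}{k-1}$, the signed part collapses to $\tfrac12(-1)^k\binom{n-1}{k}$, whose sum is $P_{n,k}$.
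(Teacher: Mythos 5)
Your proposal is correct, but it is doing more than the paper does: the paper offers no proof of this lemma at all, simply attributing the recurrences to Mantaci's work on the anti-excedance enumerator (and to Dey--Sivasubramanian), so any comparison is between your argument and a citation. Your bijective route --- inserting $n$ either as a fixed point or by splicing it into a cycle after $j$, with the sign flipping exactly in the splicing case and the excedance count increasing exactly when $\sigma(j)\le j$ --- is the standard cycle-insertion proof and all the bookkeeping checks out: position $n$ is never an excedance, position $j$ always becomes one, and the counts $k$ and $(n-1)-(k-1)=n-k$ of admissible $j$'s give precisely the coefficients in \eqref{eqn:recurrencesforevenandodd1} and \eqref{eqn:recurrencesforevenandodd2}; summing the two recurrences even recovers the excedance form of \eqref{eqn:recurrence_classical_eulerian_recurrence}, which is a good consistency check. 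Your algebraic alternative is also sound: writing $P_{n,k}=\tfrac12\bigl(A_{n,k}+(-1)^k\binom{n-1}{k}\bigr)$ via Theorem \ref{thm:differenceofoddandeven} and verifying the right-hand side reduces the claim to $k\binom{n-2}{k}=(n-1-k)\binom{n-2}{k-1}$, which holds --- though note this version is not independent of the literature, since it consumes Mantaci's difference identity and the Eulerian recurrence as inputs, whereas the bijection is fully self-contained. Either version would serve as a legitimate replacement for the paper's citation.
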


\begin{remark}
\label{remark:boundary_cases_type_a}
From Lemma \ref{recurences of Mantaci}, it is easy to show that for any positive integer $n$,
we have $P_{n,0}=1$ and $Q_{n,0}=0$.
\end{remark}

From \eqref{eqn:recurrencesforevenandodd3}, we get the following 
corollary. 

\begin{corollary}
	\label{cor:ratioalternatingofevenexcedance}
	
	For positive integers $n$, the sequences $P_n=(P_{n,k})_{k=0}^{n-1}$
	and $Q_n=(Q_{n,k})_{k=0}^{n-1}$ are ratio-alternating. Moreover, they satisfy \eqref{eqn:ratioalt2}. 
\end{corollary}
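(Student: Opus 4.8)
The plan is to read off the sign pattern of $P_{n,k} - Q_{n,k}$ directly from Mantaci's identity (Theorem \ref{thm:differenceofoddandeven}) and translate it into the definition of ratio-alternating. By \eqref{eqn:recurrencesforevenandodd3} we have $P_{n,k} - Q_{n,k} = (-1)^k \binom{n-1}{k}$. First I would observe that $\binom{n-1}{k}$ is a non-negative integer for all $0 \le k \le n-1$, so the sign of $P_{n,k} - Q_{n,k}$ is governed entirely by the parity of $k$: for $k$ even we get $P_{n,k} - Q_{n,k} = \binom{n-1}{k} \ge 0$, hence $P_{n,k} \ge Q_{n,k}$; and for $k$ odd we get $P_{n,k} - Q_{n,k} = -\binom{n-1}{k} \le 0$, hence $P_{n,k} \le Q_{n,k}$.

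These two families of inequalities are exactly the conditions \eqref{eqn:ratioalt2} in Definition \ref{def:ratio-alt}, with $A = P_n$ and $B = Q_n$: the even-indexed terms of $P_n$ dominate those of $Q_n$, and the odd-indexed terms of $P_n$ are dominated by those of $Q_n$. Since the index range $0 \le k \le n-1$ matches the range over which $P_n$ and $Q_n$ are defined, this shows that $P_n$ and $Q_n$ are ratio-alternating and, more precisely, satisfy \eqref{eqn:ratioalt2} rather than \eqref{eqn:ratioalt1} (indeed the strict inequality $\binom{n-1}{k} > 0$ throughout the range rules out the degenerate overlap of both cases).

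I do not expect any real obstacle here: the corollary is an immediate unwrapping of Mantaci's identity once one notes that binomial coefficients are non-negative. The only point worth stating carefully is the bookkeeping of which of the two alternatives \eqref{eqn:ratioalt1}/\eqref{eqn:ratioalt2} holds, since that choice is what will be fed into Theorem \ref{thm:relationbetweenratioaltandstrongsynchro} in the subsequent argument; getting the parity convention right (even index $\leftrightarrow$ $P \ge Q$) is the one thing to double-check.
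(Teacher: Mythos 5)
Your proposal is correct and is essentially identical to the paper's own proof: both read off the sign of $P_{n,k}-Q_{n,k}=(-1)^k\binom{n-1}{k}$ from Theorem \ref{thm:differenceofoddandeven} and match the resulting parity pattern to \eqref{eqn:ratioalt2}. No issues.
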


\begin{proof}
By Equation \eqref{eqn:recurrencesforevenandodd3}, we have 
$P_{n,k} \geq Q_{n,k}$ for $k$ being even and 
$P_{n,k} \leq Q_{n,k}$ for $k$ being odd, completing the proof.   
\end{proof}


\begin{lemma}
	\label{lemma:t_i}
	
	For positive integers $n$ and $1 \leq k \leq n-2$, the coefficients 
	$P_{n,k}$ satisfy following relation:
	\begin{align}	
	\label{eq:pnkassumofti}
	P_{n,k} ^2- P_{n,k+1}P_{n,k-1} & =  \sum_{i=1}^{9} T_i (n,k), 
	\end{align}
	where 
	\vspace{-2 mm}
	\begin{align*}
	T_1(n,k) & =  (k^2-1)(Q_{n-1,k}^2-Q_{n-1,k+1}Q_{n-1,k-1}),\\
	T_2(n,k) & =  Q_{n-1,k}^2 + Q_{n-1,k-1}^2 -2Q_{n-1,k-1}Q_{n-1,k} ,\\
	T_3(n,k) & =  ((n-k)^2-1)(Q_{n-1,k-1}^2-Q_{n-1,k}Q_{n-1,k-2}),\\
	T_4(n,k) & =  P_{n-1,k}^2-P_{n-1,k+1}P_{n-1,k-1},\\
	T_5(n,k) & =  (k+1)(n-k+1)(Q_{n-1,k}Q_{n-1,k-1}- Q_{n-1,k+1}Q_{n-1,k-2}), \\
	T_6(n,k) & =   (k-1)(n-k-1)(Q_{n-1,k-1}Q_{n-1,k}- Q_{n-1,k-1}Q_{n-1,k}) , \\
	T_7(n,k) & =  (n-k-1)(Q_{n-1,k-1}P_{n-1,k}- Q_{n-1,k}P_{n-1,k-1}), \\
	T_8(n,k) & =  (n-k+1)(Q_{n-1,k-1}P_{n-1,k}- Q_{n-1,k-2}P_{n-1,k+1}), \\
	T_9(n,k) & =  2kQ_{n-1,k}P_{n-1,k}- (k+1)Q_{n-1,k+1}P_{n-1,k-1}-(k-1)Q_{n-1,k-1}P_{n-1,k+1}. 
	\end{align*}
	Similar identity holds for $Q_{n,k}$. 
\end{lemma}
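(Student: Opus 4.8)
The plan is to verify the identity \eqref{eq:pnkassumofti} by a direct expansion starting from the Mantaci recurrence \eqref{eqn:recurrencesforevenandodd1}. First I would substitute $P_{n,k} = kQ_{n-1,k} + (n-k)Q_{n-1,k-1} + P_{n-1,k}$ into $P_{n,k}^2 - P_{n,k+1}P_{n,k-1}$, being careful to also shift indices correctly in $P_{n,k+1}$ (which uses $(k+1)Q_{n-1,k+1} + (n-k-1)Q_{n-1,k} + P_{n-1,k+1}$) and $P_{n,k-1}$ (which uses $(k-1)Q_{n-1,k-1} + (n-k+1)Q_{n-1,k-2} + P_{n-1,k-1}$). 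Expanding $P_{n,k}^2$ gives six distinct monomial types (three squares and three cross terms), and expanding $P_{n,k+1}P_{n,k-1}$ gives nine monomial types; the goal is to collect all fifteen-plus contributions and regroup them into the nine blocks $T_1,\ldots,T_9$.

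The natural grouping is dictated by the structure of the $T_i$'s: $T_1$ collects the $Q_{n-1}$-square terms weighted by $k^2$ against the $Q_{n-1,k+1}Q_{n-1,k-1}$ cross term weighted by $(k+1)(k-1)=k^2-1$, plus a leftover $Q_{n-1,k}^2$ that gets absorbed into $T_2$; $T_3$ does the analogous bookkeeping for the $Q_{n-1,k-1}$-type squares with weight $(n-k)^2$ against $(n-k-1)(n-k+1)$; $T_4$ is simply the inherited $P_{n-1}$ log-concavity expression; $T_5$ and $T_6$ handle the mixed $Q_{n-1}\cdot Q_{n-1}$ cross terms at shifted indices; $T_7$ and $T_8$ handle the $Q_{n-1}\cdot P_{n-1}$ cross terms; and $T_9$ collects the remaining $Q_{n-1,k}P_{n-1,k}$-type terms. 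I would organize the computation as a table of coefficients: for each product of two basis elements from $\{Q_{n-1,j}, P_{n-1,j}\}$, record the coefficient coming from $P_{n,k}^2$ and the one coming from $-P_{n,k+1}P_{n,k-1}$, then check that the row sums match the claimed decomposition. Note that $T_6(n,k)$ is identically zero as written (the bracket $Q_{n-1,k-1}Q_{n-1,k} - Q_{n-1,k-1}Q_{n-1,k}$ vanishes), so it is a placeholder term that makes the pattern uniform; I would remark on this rather than treat it as substantive.

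For the ``similar identity'' for $Q_{n,k}$ claimed at the end, I would simply observe that \eqref{eqn:recurrencesforevenandodd2} is obtained from \eqref{eqn:recurrencesforevenandodd1} by swapping the roles of $P$ and $Q$ throughout, so the entire expansion goes through verbatim with $P_{n-1}\leftrightarrow Q_{n-1}$ interchanged in every $T_i$; hence $Q_{n,k}^2 - Q_{n,k+1}Q_{n,k-1} = \sum_{i=1}^{9} \widetilde{T_i}(n,k)$ where $\widetilde{T_i}$ is $T_i$ with $P$ and $Q$ subscripts swapped at level $n-1$. This symmetry observation costs nothing and avoids redoing the algebra.

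The main obstacle is purely one of careful bookkeeping rather than any conceptual difficulty: there is no cleverness needed beyond getting every index shift and every integer coefficient right, and the risk is an arithmetic slip in matching the fifteen-plus raw terms to the nine target blocks. To control this I would present the expansion in a few grouped displays — one display collecting the pure-$Q$ contributions, one for the pure-$P$ contribution, and one for the mixed $PQ$ contributions — and in each case exhibit the regrouping into the relevant $T_i$'s explicitly, so the reader can check the coefficient arithmetic line by line. The payoff of the lemma, namely that each $T_i(n,k)$ can subsequently be shown nonnegative (using log-concavity and Corollary \ref{cor:strongsynchrnointerlacinglogconcave}-type inequalities for the $Q_{n-1}$ and $P_{n-1}$ sequences together with the ratio-alternating relation of Corollary \ref{cor:ratioalternatingofevenexcedance}), is what drives the proof of Theorem \ref{thm:MainresultforevenlogconcaveTypeA}, but that nonnegativity analysis belongs to the steps after this lemma and is not part of establishing the identity itself.
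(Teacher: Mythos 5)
Your proposal is correct and follows essentially the same route as the paper: the paper likewise expands $P_{n,k}^2$ and $P_{n,k+1}P_{n,k-1}$ directly from the Mantaci recurrence \eqref{eqn:recurrencesforevenandodd1} (with exactly the index shifts you describe) and then subtracts and regroups into the nine blocks $T_1,\ldots,T_9$. Your additional observations --- that $T_6$ vanishes identically and that the $Q_{n,k}$ version follows by swapping $P\leftrightarrow Q$ at level $n-1$ --- are consistent with how the paper uses the lemma and add nothing that would change the argument.
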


\begin{proof}
	By \eqref{eqn:recurrencesforevenandodd1}, 
	\begin{eqnarray}
	\label{eq:pnksqaure}
	P_{n,k}^2 & = & k^2 Q_{n-1,k}^2 + (n-k)^2Q_{n-1,k-1}^2+P_{n-1,k}^2+2k(n-k)Q_{n-1,k}Q_{n-1,k-1} 
	\nonumber \\ 
	& & + 2kQ_{n-1,k}P_{n-1,k}+2(n-k)Q_{n-1,k-1}P_{n-1,k} , 
	\end{eqnarray}
	and 
	\begin{eqnarray}
	\label{eq:pnkplusoneminusone}
	P_{n,k+1}P_{n,k-1} & = & (k^2-1) Q_{n-1,k+1}Q_{n-1,k-1} +[(n-k)^2-1]Q_{n-1,k}Q_{n-1,k-2}\nonumber \\  & & + P_{n-1,k+1}
	P_{n-1,k-1}   + (k+1)(n-k+1)Q_{n-1,k+1}Q_{n-1,k-2} 
	\nonumber \\& & +(k-1)(n-k-1)Q_{n-1,k-1}Q_{n-1,k}    + (k+1)
	Q_{n-1,k+1}P_{n-1,k-1} +\nonumber \\ & & +
	(k-1)Q_{n-1,k-1}P_{n-1,k+1} +(n-k-1)Q_{n-1,k}P_{n-1,k-1} 
	\nonumber \\ & &   +(n-k+1)Q_{n-1,k-2}P_{n-1,k+1}. 
	\end{eqnarray}
	Subtracting \eqref{eq:pnkplusoneminusone} from \eqref{eq:pnksqaure} 
	and rearranging suitably, we get \eqref{eq:pnkassumofti}. 
\end{proof}

\vspace{3 mm}

\noindent
{\bf Proof of Theorem \ref{thm:MainresultforevenlogconcaveTypeA}: }
We use induction on $n$. The base cases when $n=2$ and $n=3$ are easy to be
verified. Assume that  $P_{n-1}$ and $Q_{n-1}$ are strongly
synchronised. 
	Hence, for all $k \in \mathbb{N}$ we have
	$P_{n-1,k}^2 \geq P_{n-1,k-1}P_{n-1,k+1}$ and $Q_{n-1,k}^2 \geq Q_{n-1,k-1}Q_{n-1,k+1}$. In particular,  for odd $k$ with 
	$1 \leq k \leq n-2$, we have
	$P_{n-1,k}^2 \geq P_{n-1,k-1}P_{n-1,k+1}$ and for 
	even $k$ with $1 \leq k \leq n-2$, we have
	$Q_{n-1,k}^2 \geq Q_{n-1,k-1}Q_{n-1,k+1}$. 
	
	\vspace{1 mm}
	 
	We show that for odd $k$ with 
	$1 \leq k \leq n-1$, $P_{n,k}^2 \geq P_{n,k-1}P_{n,k+1}$.  
	
	We first observe that
	\begin{enumerate}
		\item $ T_2(n,k) = (Q_{n-1,k}-Q_{n-1,k-1})^2  \geq 0 $
		\item  Next, we show that $T_1(n,k)$, $T_3(n,k)$, $T_4(n,k)$, $T_5(n,k)$, $T_{6}(n,k)$,  $T_{8}(n,k)$
		and $T_{9}(n,k)$ are non-negative using our inductive hypothesis. 
		Non-negativity of $T_1$, $T_3$ and $T_5$ follows from the 
		log-concavity of $Q_{n-1}$ while non-negativity of $T_4$ 
		follows from log-concavity of $P_{n-1} $. 
		As $P_{n-1}$ and $Q_{n-1}$ are strongly synchronised,
		we have $Q_{n-1,k}P_{n-1,k} \geq Q_{n-1,k+1}P_{n-1,k-1}$ and also $Q_{n-1,k}P_{n-1,k} \geq Q_{n-1,k-1}P_{n-1,k+1}$. Thus, $T_{9}$ is non-negative. 
		Non-negativity of $T_{8}$  follows from the 
		strong synchronisation of $P_{n-1}$ and $Q_{n-1}$ and Corollary \ref{cor:strongsynchrnointerlacinglogconcave}.
		Further, $T_6(n,k)=0$.   Hence we get that $T_4(n,k)+T_6(n,k)+T_{8}(n,k)+T_{9}(n,k) \geq 0$. The only negative term is $T_7(n,k).$ We will show that $T_1(n,k)+T_3(n,k)+T_{5}(n,k)+T_7(n,k) \geq 0$, that is the sum of the positive contributions of $T_1(n,k)$,  $T_3(n,k)$ and $T_5(n,k)$ will overkill the negative contribution of $T_7(n,k)$. 
	\end{enumerate}
	We need three further cases: (i) when $k=1$, (ii) when $k=n-1$, 
	(iii) when $3 \leq k \leq n-2$.  
	
	\vspace{2 mm}
	
	{\bf Case 1: When $k=1$} 
	
	\vspace{2 mm}

Setting $k=1$ and $k=2$ in Equation \eqref{eqn:recurrencesforevenandodd1}, we get 
	\begin{eqnarray}
	\label{eqn:kequals1CASE1}
	P_{n,1}^2-P_{n,2}P_{n,0}& = & (Q_{n-1,1}+P_{n-1,1})^2 - [2Q_{n-1,2}+(n-2)Q_{n-1,1}+P_{n-1,2}] \nonumber \\
	& = & [Q_{n-1,1}^2 - Q_{n-1,2}] + [Q_{n-1,1}P_{n-1,1}-Q_{n-1,2}] + [Q_{n-1,1}(P_{n-1,1}-(n-2))] \nonumber \\ 
	&  & + [P_{n-1,1}^2-P_{n-1,2}] 
	\end{eqnarray}
By induction, the sequences $P_{n-1}$ and $Q_{n-1}$ are strongly synchronised. Hence, $Q_{n-1,1}^2 - Q_{n-1,2}= Q_{n-1,1}^2 - Q_{n-1,2}P_{n-1,0} \geq 0$. Similarly, we can show that $Q_{n-1,1}P_{n-1,1}-Q_{n-1,2} \geq 0$ and $P_{n-1,1}^2-P_{n-1,2} \geq 0$. Further, by
Lemma \ref{recurences of Mantaci} and Remark \ref{remark:boundary_cases_type_a_3}, we have $P_{n-1,1}=Q_{n-2,1}+P_{n-2,1}=A_{n-2,1} \geq n-1$. Thus, we have $P_{n,1}^2-P_{n,2}P_{n,0} \geq 0$.

\vspace{2 mm}


	{\bf Case 2: When $k=n-1$}
	
	\vspace{-3 mm}
	
	\begin{align}
	P_{n,k}^2 \geq P_{n,k-1}P_{n,k+1}=0.
	\end{align}

	{\bf Case 3: When $3 \leq k \leq n-2$ and $k$ odd}

 \vspace{-1 mm}

	\begin{align}
	\label{eqn:T1}
	T_1(n,k) & =  (k^2-1)Q_{n-1,k}^2-(k^2-1)Q_{n-1,k+1}Q_{n-1,k-1 } 
	\nonumber \\ 
	& \geq  (k^2-1) Q_{n-1,k}^2- (k^2-1)P_{n-1,k}^2 \nonumber \\
	& =  (k^2-1)A_{n-1,k}(Q_{n-1,k}-P_{n-1,k}) \nonumber \\
	& =  (k^2-1)\binom{n-2}{k}A_{n-1,k} \nonumber \\
	& =  \frac{(k^2-1)(n-1-k)}{k}\binom{n-2}{k-1}A_{n-1,k} \nonumber  \\
	& \geq   (n-k-1)\binom{n-2}{k-1}Q_{n-1,k}.
	\end{align}
	
	The second line follows by induction. The third line uses 
	$A_{n-1,k}=P_{n-1,k}+Q_{n-1,k}$. The fourth line uses \eqref{eqn:recurrencesforevenandodd3}. The last line follows 
	from the fact that $k^2-1 \geq k $ when  $k \geq 2$.
	We observe that

	\begin{align}
	\label{eqn:tobeusedinT7}
	Q_{n-1,k+1}Q_{n-1,k-2} &= \frac{Q_{n-1,k+1}Q_{n-1,k}Q_{n-1,k-1}Q_{n-1,k-2}}{Q_{n-1,k}Q_{n-1,k-1}} \nonumber \\
	& \leq  \frac{Q_{n-1,k}P_{n-1,k}Q_{n-1,k-1}^2}{Q_{n-1,k}Q_{n-1,k-1}} =  P_{n-1,k}Q_{n-1,k-1}.
	\end{align}
	
	The first step is legitimate since  $Q_{n-1,k}$ and $Q_{n-1,k-1}$ are positive for $3 \leq k \leq n-2$.  
	The second step follows using strong synchronisation of the 
	sequences $P_{n-1,k}$ and $Q_{n-1,k}$. 
	\begin{align}
	\label{eqn:T_7}
	T_5(n,k) & =  (k+1)(n-k+1)(Q_{n-1,k}Q_{n-1,k-1}-Q_{n-1,k+1}Q_{n-1,k-2}) \nonumber \\
	& \geq  (k+1)(n-k+1)(Q_{n-1,k}Q_{n-1,k-1}-Q_{n-1,k-1}P_{n-1,k}) \nonumber \\
	& =  (k+1)(n-k+1)Q_{n-1,k-1}\binom{n-2}{k} 
	\end{align}	
	
	Here the second line uses \eqref{eqn:tobeusedinT7} and the 
	third line uses 
	line uses \eqref{eqn:recurrencesforevenandodd3}. 
	
	\vspace{- 5 mm} 
	
	\begin{align}
	\label{eqn:T_9}
	-T_7(n,k) & =  (n-k-1)(Q_{n-1,k}P_{n-1,k-1}- P_{n-1,k}Q_{n-1,k-1}) 
	\nonumber \\
	& = \displaystyle  (n-k-1)\bigg[(P_{n-1,k}+\binom{n-2}{k})(Q_{n-1,k-1}+\binom{n-2}{k-1})- P_{n-1,k}Q_{n-1,k-1}\bigg] 
	\nonumber \\
	& =  (n-k-1)\bigg[\binom{n-2}{k-1}Q_{n-1,k}+ \binom{n-2}{k}Q_{n-1,k-1}\bigg] \nonumber \\
	& =  (n-k-1)\binom{n-2}{k-1}Q_{n-1,k}+ (n-k-1)\binom{n-2}{k}Q_{n-1,k-1} \nonumber \\
	& \leq  T_1(n,k)+T_5(n,k).  
	\end{align}

	Here, both the second and the third line uses Theorem \ref{thm:differenceofoddandeven}. The fifth line 
	follows from \eqref{eqn:T1} and \eqref{eqn:T_7}.  
	Hence, when $3 \leq k \leq n-2$ and $k$ odd,  
	$T_1(n,k)+T_5(n,k)+T_7(n,k) \geq 0$.

	Thus for odd $k$ with $1 \leq k \leq n-1$, we have 
	$P_{n,k}^2 \geq P_{n,k-1}P_{n,k+1}$. In an identical manner 
	we can get $Q_{n,k}^2 \geq Q_{n,k-1}Q_{n,k+1}$ for 
	even $k$ with $1 \leq k \leq n-1$. So we proved that
	 $P_{n}$ is odd log-concave and $Q_n$ is even log-concave.
	  By Corollary \ref{cor:ratioalternatingofevenexcedance},
	  the sequences 
	   $P_n$ and $Q_n$ are
	ratio-alternating, hence by 
	Theorem \ref{thm:relationbetweenratioaltandstrongsynchro},
	 the sequences $P_{n}$ 
	and $Q_{n}$ are strongly synchronised. 
	{\hspace*{\fill}{\eod}}


\begin{corollary}
\label{cor:logconcavityofallbinarytypeA}
For positive integers $n$, 
all the sequences in $S(P_{n},Q_{n})$ are log-concave.   
\end{corollary}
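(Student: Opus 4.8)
The plan is to derive this directly from the two results already in hand. By Theorem \ref{thm:MainresultforevenlogconcaveTypeA}, for every positive integer $n$ the pair $P_n = (P_{n,k})_{k=0}^{n-1}$ and $Q_n = (Q_{n,k})_{k=0}^{n-1}$ is strongly synchronised, i.e.\ the inequality in \eqref{eqn:defnstrongsynchro} holds for all $1 \leq k \leq n-2$. This is precisely the hypothesis needed to invoke Theorem \ref{thm:strongsynchroandlog}, which asserts the equivalence between strong synchronisation of two sequences $A, B$ and log-concavity of \emph{every} sequence in $S(A,B)$.

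Concretely, I would fix an arbitrary $C = (c_k)_{k=0}^{n-1} \in S(P_n,Q_n)$, so that $c_k \in \{P_{n,k}, Q_{n,k}\}$ for each $k$. For any $1 \leq k \leq n-2$ we then have $c_k \geq \min\{P_{n,k}, Q_{n,k}\}$, while $c_{k+1} \leq \max\{P_{n,k+1}, Q_{n,k+1}\}$ and $c_{k-1} \leq \max\{P_{n,k-1}, Q_{n,k-1}\}$. Chaining these with \eqref{eqn:defnstrongsynchro} gives
\[
c_k^2 \;\geq\; (\min\{P_{n,k},Q_{n,k}\})^2 \;\geq\; \max\{P_{n,k+1},Q_{n,k+1}\}\cdot\max\{P_{n,k-1},Q_{n,k-1}\} \;\geq\; c_{k+1}\,c_{k-1},
\]
which is exactly log-concavity of $C$. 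Since $C$ was arbitrary, the claim follows. (This is literally the forward implication already carried out in the proof of Theorem \ref{thm:strongsynchroandlog}, so one could alternatively just cite that theorem in one line.)

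There is essentially no obstacle here: the corollary is a packaging of Theorem \ref{thm:MainresultforevenlogconcaveTypeA} through Theorem \ref{thm:strongsynchroandlog}, and all the genuine work — the inductive estimate on $P_{n,k}^2 - P_{n,k+1}P_{n,k-1}$ via the decomposition into $T_1,\dots,T_9$ and the companion bound for $Q_{n,k}$ — has already been done in Section \ref{section:strongsynchronisationoftypeAexcedance}. If anything needs care, it is only the boundary indices $k=0$ and $k=n-1$, where one uses $P_{n,0}=1$, $Q_{n,0}=0$ (Remark \ref{remark:boundary_cases_type_a}) and the vanishing of the relevant products, but these are already subsumed in the statement of strong synchronisation. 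Hence the proof is a one- or two-sentence deduction.
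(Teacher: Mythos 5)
Your proposal is correct and matches the paper's own proof exactly: the paper also deduces the corollary by combining Theorem \ref{thm:MainresultforevenlogconcaveTypeA} with the forward implication of Theorem \ref{thm:strongsynchroandlog}. The chain of inequalities you write out is precisely the one appearing in the proof of Theorem \ref{thm:strongsynchroandlog}, so citing that theorem suffices.
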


\begin{proof}
By Theorem \ref{thm:MainresultforevenlogconcaveTypeA}, the sequences $P_n=(P_{n,k})_{k=0}^{n-1}$ and $Q_n=(Q_{n,k})_{k=0}^{n-1}$ are strongly synchronised. . Hence, by Theorem \ref{thm:strongsynchroandlog}, we get that all the sequences in $S(P_{n},Q_{n})$ are log-concave.  
\end{proof}

\section{Type B Coxeter groups}
\label{sec:typeB}

Let $\BB_n$ be the set of permutations $\pi$ of $\{-n, -(n-1),
\ldots, -1, 1, 2, \ldots n\}$ that satisfy $\pi(-i) = -\pi(i)$. 
For $\pi \in \BB_n$, for $1 \leq i \leq n$, we alternatively 
denote $\pi(i)$ as $\pi_i$.  
For $\pi \in \BB_n$, define $\Negs(\pi) = \{i : i > 0, \pi_i < 0 \}$ 
be the set of elements which occur with a negative sign.  
Define $\inv_B(\pi)=  | \{ 1 \leq i < j \leq n : \pi_i > \pi_j \} |+  
| \{ 1 \leq i < j \leq n : -\pi_i > \pi_j \}| +|\Negs(\pi)| $.  
Let $\BB^+_n \subseteq \BB_n$ denote the subset of 
elements having even $\inv_B()$ value and let $\BB_n^- = 
\BB_n \setminus \BB_n^+$.  
Following Brenti's definition of excedance from 
\cite{brenti-q-eulerian-94}, define $\exc_B(\pi)= |\{ i \in [n]: \pi_{|\pi(i)|} > \pi_ i\}|+
|\{ i \in [n]: \pi_i =- i\}|$ and define $\wexc_B(\pi)= |\{ i \in [n]: \pi_{|\pi(i)|} > \pi_ i\}|+
|\{ i \in [n]: \pi_i = i\}|$. 
For $\pi \in \BB_n $, let $ \pi_0=0$.  
We refer the reader to Petersen's book 
\cite[Chapter 13]{petersen-eulerian-nos-book}
for the following definition of type B descents. 
Define $\des_B(\pi)= |\{ i \in [0,1,2\ldots ,n-1]: \pi_i > \pi_{i+1}\}| $ 
and $\asc_B(\pi)= |\{ i \in [0,1,2\ldots ,n-1]: \pi_i <  \pi_{i+1}\}|$.
Let $B_{n,k}$, $B^+_{n,k} $ and $B^-_{n,k}$ denote the number of 
signed permutations with $k$ descents in $\BB_n$, $\BB_n^+$ and $\BB_n^-$ respectively. Let $E^B_{n,k}$, $P^B_{n,k} $ and $Q^B_{n,k}$ denote 
the number of signed
permutations with $k$ excedances in $\BB_n$, $\BB_n^+$ and $\BB_n^-$ respectively.  

Brenti in  \cite[Theorem 3.15]{brenti-q-eulerian-94} proved the
type B counterpart of MacMahon's theorem and showed that 
$B_{n,k} = E^B_{n,k}$.   Brenti proved the result by 
showing the following.

\begin{theorem}[Brenti]
	\label{thm:brenti_fft}
	For positive integers $n$, there exists a bijection 
	$h_n:\BB_n \mapsto \BB_n$ 
	such that $\asc_B(h_n(\pi))=\wexc_B(\pi)$ and $|\Negs((h_n(\pi))|= |\Negs(\pi)|$. 
\end{theorem}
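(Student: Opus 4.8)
The plan is to realize $h_n$ as a type-$B$ analogue of Foata's first fundamental transformation. Given $\pi \in \BB_n$, I would first take its disjoint cycle decomposition as a permutation of $\Omega = \{-n, \dots, -1, 1, \dots, n\}$. Because $\pi$ commutes with the involution $x \mapsto -x$, every cycle $C$ is either \emph{paired} -- meaning $-C$ is a different cycle -- or \emph{negative} -- meaning $-C = C$, which forces even length and the shape $(d_1, \dots, d_m, -d_1, \dots, -d_m)$. I would choose a canonical representative of each paired orbit $\{C, -C\}$ and write it as a block starting from its entry of largest absolute value; for each negative cycle I would keep only the half-block $(d_1, \dots, d_m)$, again normalised by the cyclic symmetry so that it starts from the entry of largest absolute value among $d_1, \dots, d_m$. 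The leaders of the blocks then have distinct absolute values, so listing the blocks in decreasing order of leader and concatenating gives a word $w = w_1 \cdots w_n$; the length is $n$ by the identity $\sum_{\text{paired}} |C| + \sum_{\text{negative}} m = n$, and the absolute values of the $w_i$ are exactly $1, \dots, n$, so $w$ is a legitimate window notation. I set $h_n(\pi) := w$. The sign conventions -- which of $C, -C$ to keep, and what signs to attach inside $w$ -- are not innocent: they must be pinned down precisely so that the number of negative entries of $w$ equals $|\Negs(\pi)|$, using the fact that the ``$\pi$-image is negative'' indices contributed by an orbit are counted by the sign-change pattern of the corresponding cycle.

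I would then check that $h_n$ is a bijection by writing its inverse in the usual decoding style: in a window $w_1 \cdots w_n$ the block boundaries are recovered from the left-to-right records of $|w_i|$ together with the sign data, each block is reparenthesised into a cycle, a paired block is completed by adjoining its negated mirror and a negative half-block by adjoining $-d_1, \dots, -d_m$. Verifying that encoding followed by decoding, and decoding followed by encoding, are the identity is routine once the canonical form is fixed, and by construction this step also delivers $|\Negs(h_n(\pi))| = |\Negs(\pi)|$.

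The substance of the proof, and what I expect to be the main obstacle, is the equidistribution $\asc_B(h_n(\pi)) = \wexc_B(\pi)$. I would establish a matching between the $n$ steps $w_i \to w_{i+1}$, $i = 0, \dots, n-1$ with the convention $w_0 = 0$, and the $n$ indices of $\pi$, and show that a step is a type-$B$ ascent exactly when its matched index is a type-$B$ weak excedance. A step $c_j \to c_{j+1}$ internal to a block encodes $\pi(c_j) = c_{j+1}$ and has to be translated into the defining inequality $\pi_{|\pi(t)|} > \pi_t$ at the appropriate index $t$; the delicate points are that positions in a window are absolute values, that the cycle is traversed in the direction in which $\pi$ acts, and that the signs recorded in $w$ are not the signs of the cycle entries themselves. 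The steps that cross a block boundary, and the opening step $0 \to w_1$, must then be arranged to account exactly for the fixed points $\pi_t = t$ contributing to $\wexc_B$, as opposed to the $\pi_t = -t$ contributions that appear in $\exc_B$. Making all of these sign and boundary conventions simultaneously consistent is the real work. If the direct dictionary turns out to be too unwieldy, a fallback is induction on $n$: delete $\{n, -n\}$ from its cycle in $\pi$ to obtain an element of $\BB_{n-1}$, apply $h_{n-1}$, and reinsert $n$ or $-n$ into the window at a position determined by how $n$ sat in $\pi$; here one must show that the set of admissible reinsertion positions and their effect on $\asc_B$ exactly matches the set of ways $n$ could have occurred in $\pi$ and its effect on $\wexc_B$, with $|\Negs|$ changing compatibly on both sides.
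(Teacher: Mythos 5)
The first thing to say is that the paper itself contains no proof of this statement: it is quoted as Brenti's theorem with a pointer to \cite[Theorem 3.15]{brenti-q-eulerian-94}, so the only ``paper proof'' is a citation. Your outline is a plausible reconstruction of the kind of argument Brenti's original proof uses --- a type-$B$ analogue of Foata's fundamental transformation built from the cycle decomposition on $\{-n,\dots,-1,1,\dots,n\}$, with cycles split into paired orbits $\{C,-C\}$ and self-negative cycles, canonical representatives concatenated into a window word. The bookkeeping you do carry out (the length count $\sum_{\mathrm{paired}}|C|+\sum_{\mathrm{negative}}m=n$, the fact that the $|w_i|$ exhaust $[n]$, the distinctness of the leaders) is correct.

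However, what you have written is a plan rather than a proof, and the gap sits exactly at the theorem's content. The two claims that carry all the weight --- that the representative-and-sign conventions can be fixed so that $|\Negs(h_n(\pi))|=|\Negs(\pi)|$, and that under the matching of steps $w_i\to w_{i+1}$ (with $w_0=0$) to indices of $\pi$ a type-$B$ ascent occurs precisely at a weak excedance --- are both explicitly deferred in your own words (``must be pinned down precisely'', ``making all of these sign and boundary conventions simultaneously consistent is the real work''). No specific convention is written down, and no verification is performed at the delicate places you correctly identify: the block-boundary steps, the opening step $0\to w_1$, the fixed points $\pi_t=t$ (which count toward $\wexc_B$) versus the indices with $\pi_t=-t$ (which count toward $\exc_B$ but not $\wexc_B$), and the translation of $\pi(c_j)=c_{j+1}$ into the inequality $\pi_{|\pi(t)|}>\pi_t$. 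These are not routine: whether blocks are ordered by increasing or decreasing leader, and which of $C$ or $-C$ is retained, changes whether boundary steps are ascents, and a wrong choice yields $\exc_B$ or an unrelated statistic instead of $\wexc_B$. The inductive fallback has the same status --- the matching of reinsertion positions to occurrences of $\pm n$ is asserted, not proved. So the strategy is the right one, but the central equidistribution $\asc_B(h_n(\pi))=\wexc_B(\pi)$ remains unestablished as written.
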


Reiner in \cite[Theorem 3.2]{reiner-descents-weyl} proved the 
following:

\begin{theorem}[Reiner]
	\label{thm:Reinersigndescent}
	For positive integers $n$ and $0\leq k \leq n$, 
	$B^+_{n,k}$ and $B^-_{n,k}$ satisfy the following recurrence 
	relations:
	\vspace{-2 mm}
	\begin{align}
	\label{eqn:btypebinomialdescent}
	B^+_{n,k}-B^-_{n,k} & =  (-1)^k\binom{n}{k}.
	\end{align}
\end{theorem}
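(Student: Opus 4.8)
\medskip
\noindent\textbf{Proof proposal.} The plan is to recast the claim as a polynomial identity and prove it by induction via a sign-reversing involution. Set
$$F_n(t) \;:=\; \sum_{\pi \in \BB_n} (-1)^{\inv_B(\pi)}\, t^{\des_B(\pi)} \;=\; \sum_{k=0}^{n}\bigl(B^+_{n,k}-B^-_{n,k}\bigr)\,t^k,$$
so the theorem is equivalent to $F_n(t) = (1-t)^n$, because $(1-t)^n = \sum_k (-1)^k\binom{n}{k}t^k$. The engine will be a $\des_B$-preserving, $(-1)^{\inv_B}$-reversing involution on part of $\BB_n$, together with an identification of its fixed points with $\BB_{n-1}$.

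First I would define $\iota\colon \BB_n \to \BB_n$ as follows: in $\pi$ locate the unique position $p$ with $|\pi_p| = n$; if $p < n$, let $\iota(\pi)$ agree with $\pi$ except that the sign of the entry in position $p$ is reversed, and if $p = n$ set $\iota(\pi) = \pi$. This is an involution with fixed-point set $\{\pi \in \BB_n : |\pi_n| = n\}$. The two things to verify are that on the non-fixed points $\iota$ preserves $\des_B$ and reverses the parity of $\inv_B$. For $\des_B$: only the comparisons $(\pi_{p-1},\pi_p)$ and $(\pi_p,\pi_{p+1})$ involve position $p$ (both are present since $1 \le p \le n-1$, with the convention $\pi_0 = 0$), and exactly one of the two is a descent whether $\pi_p = n$ (namely $(n,\pi_{p+1})$) or $\pi_p = -n$ (namely $(\pi_{p-1},-n)$); the remaining comparisons are untouched. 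For $\inv_B$: using the three-term definition, flipping $\pi_p$ from $n$ to $-n$ changes $|\{i<j:\pi_i>\pi_j\}|$ by $(p-1)-(n-p)$, changes $|\{i<j:-\pi_i>\pi_j\}|$ by $(p-1)+(n-p)$, and changes $|\Negs(\pi)|$ by $1$, for a total change $2p-1$, which is odd. Hence the non-fixed points cancel in pairs and $F_n(t) = \sum_{|\pi_n|=n} (-1)^{\inv_B(\pi)}t^{\des_B(\pi)}$.

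Next I would handle the fixed points by splitting on $\pi_n = n$ versus $\pi_n = -n$; deleting the last entry is a bijection from each of $\{\pi:\pi_n = n\}$ and $\{\pi:\pi_n = -n\}$ onto $\BB_{n-1}$. When $\pi_n = n$ the comparison $(\pi_{n-1},n)$ is an ascent, and deleting the last entry changes neither $\des_B$ nor the parity of $\inv_B$ (the deleted entry exceeds every other, so it contributes nothing to the first two terms of $\inv_B$ and lies outside $\Negs$); these fixed points contribute $F_{n-1}(t)$. When $\pi_n = -n$ the comparison $(\pi_{n-1},-n)$ is a descent, so deletion decreases $\des_B$ by $1$, and it decreases the first term of $\inv_B$ by $n-1$, the second by $n-1$, and $|\Negs|$ by $1$, a total change $2n-1$, odd; these contribute $-t\,F_{n-1}(t)$. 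Therefore $F_n(t) = (1-t)F_{n-1}(t)$, and with the base case $F_1(t) = 1 - t$ (the two elements of $\BB_1$) induction yields $F_n(t) = (1-t)^n$.

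The one delicate point — the part I would actually write out in full — is the parity bookkeeping for $\inv_B$ under the sign reversal and under the deletion of position $n$; everything else is immediate from the definitions. A clean shortcut, if one is willing to invoke it, is that $(-1)^{\inv_B}$ is the sign character of the Coxeter group $\BB_n$, equivalently $\inv_B(\pi) \equiv |\{i<j:\pi_i>\pi_j\}| + \sum_{\pi_i<0}|\pi_i| \ppmod{2}$, which reduces each parity count to one line. The same cancellation can also be organised through the classical construction behind the recurrence $B_{n,k} = (2k+1)B_{n-1,k} + (2(n-k)+1)B_{n-1,k-1}$, namely inserting $n$ or $-n$ into one of the $n$ slots of an element of $\BB_{n-1}$: for each interior slot the two fillings (by $+n$ and by $-n$) produce elements with equal $\des_B$ and opposite $\inv_B$-parity, so they annihilate in $F_n(t)$, leaving only the two end-slot fillings and again $F_n(t) = (1-t)F_{n-1}(t)$.
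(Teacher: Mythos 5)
Your proposal is correct, and it supplies something the paper does not: the paper states this result only as a quotation of Reiner \cite[Theorem 3.2]{reiner-descents-weyl} and gives no proof at all. Your argument is a clean, self-contained alternative. The key identity $\sum_{\pi\in\BB_n}(-1)^{\inv_B(\pi)}t^{\des_B(\pi)}=(1-t)^n$ is exactly equivalent to \eqref{eqn:btypebinomialdescent}, and your involution (flip the sign of the entry of absolute value $n$ unless it sits in the last position) does what you claim: I checked that for $p<n$ exactly one of the two comparisons $(\pi_{p-1},\pi_p)$, $(\pi_p,\pi_{p+1})$ is a descent whether $\pi_p=n$ or $\pi_p=-n$ (including $p=1$ with $\pi_0=0$), and that the three pieces of $\inv_B$ change by $(p-1)-(n-p)$, $(p-1)+(n-p)$ and $1$, totalling $2p-1$, which is odd. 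The fixed-point analysis is likewise right: deleting a final $n$ changes nothing, deleting a final $-n$ drops $\des_B$ by $1$ and $\inv_B$ by $2n-1$, giving $F_n(t)=(1-t)F_{n-1}(t)$ with $F_1(t)=1-t$. Your closing remark that the cancellation can be organised through the insertion construction behind Lemma \ref{lemma:rec_exc_type_b} is also sound, and in fact meshes well with the paper, since that recurrence immediately yields $B^+_{n,k}-B^-_{n,k}=-(B^+_{n-1,k}-B^-_{n-1,k})+ (B^+_{n-1,k}-B^-_{n-1,k}) - \bigl[(B^+_{n-1,k-1}-B^-_{n-1,k-1})\bigr]$ only after the $2k$ and $2n-2k+1$ coefficients are accounted for, so the involution route is the cleaner way to make it rigorous. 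The only caveat is the one you already flag: the parity bookkeeping for $\inv_B$ is the part that must be written out, and your sketch of it is accurate against the paper's definition $\inv_B(\pi)=|\{i<j:\pi_i>\pi_j\}|+|\{i<j:-\pi_i>\pi_j\}|+|\Negs(\pi)|$.
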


In \cite[Lemma 34]{siva-dey-gamma_positive_descents_alt_group_ejc},
Dey and Sivasubramanian proved the following recurrence between the coefficients $B^+_{n,k}$ and $B^-_{n,k}$.  

\begin{lemma} 
	\label{lemma:rec_exc_type_b}
	For positive integers $n$ and $1\leq k \leq n$, 
	$B^+_{n,k}$ and $B^-_{n,k}$ satisfy the following recurrence 
	relations:
	\begin{enumerate}
		\item $B^+_{n,k}= 2kB^-_{n-1,k} + (2n-2k+1)B^-_{n-1,k-1} + B^+_{n-1,k}, $
		\item $B^-_{n,k}= 2kB^+_{n-1,k} + (2n-2k+1)B^+_{n-1,k-1} + B^-_{n-1,k}, $
	\end{enumerate}
where $B_{1,0}^+=1$, $B_{1,1}^+=0$, $B_{1,0}^-=0$ and $B_{1,1}^-=1$ .
\end{lemma}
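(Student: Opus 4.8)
The plan is to derive the lemma algebraically from three facts that are already on the table: the classical type~B Eulerian recurrence $B_{n,k} = (2k+1)B_{n-1,k} + \bigl(2(n-k)+1\bigr)B_{n-1,k-1}$ (Brenti, \cite{brenti-q-eulerian-94}), the definitional identity $B^+_{n,k} + B^-_{n,k} = B_{n,k}$, and Reiner's signed binomial identity $B^+_{n,k} - B^-_{n,k} = (-1)^k\binom{n}{k}$ from Theorem~\ref{thm:Reinersigndescent}. Solving the last two relations for the two quantities separately gives the closed forms $B^{\pm}_{n,k} = \tfrac12\bigl(B_{n,k} \pm (-1)^k\binom{n}{k}\bigr)$, valid for $n \ge 1$ and $0 \le k \le n$ with the usual convention that out-of-range entries are $0$. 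Because of the symmetry $+ \leftrightarrow -$ (which flips the sign of the $(-1)^k\binom{n}{k}$ term but leaves $B_{n,k}$ fixed), it is enough to establish the first of the two claimed recurrences; the second then follows verbatim.

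First I would substitute the closed forms into the right-hand side $2kB^-_{n-1,k} + (2n-2k+1)B^-_{n-1,k-1} + B^+_{n-1,k}$ and split it into an ``Eulerian part'' (the terms carrying a $B_{n-1,\cdot}$) and a ``binomial part'' (the terms carrying a $(-1)^{\cdot}\binom{n-1}{\cdot}$). The Eulerian part is $\tfrac12\bigl[(2k+1)B_{n-1,k} + (2(n-k)+1)B_{n-1,k-1}\bigr]$, which collapses to $\tfrac12 B_{n,k}$ by Brenti's recurrence (note $2n-2k+1 = 2(n-k)+1$). The binomial part is $\tfrac12(-1)^k\bigl[(1-2k)\binom{n-1}{k} + (2n-2k+1)\binom{n-1}{k-1}\bigr]$, and one checks that it equals $\tfrac12(-1)^k\binom{n}{k}$ using Pascal's rule $\binom{n}{k} = \binom{n-1}{k} + \binom{n-1}{k-1}$ together with the absorption identity $k\binom{n-1}{k} = (n-k)\binom{n-1}{k-1}$. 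Adding the two parts yields $\tfrac12\bigl(B_{n,k} + (-1)^k\binom{n}{k}\bigr) = B^+_{n,k}$, as wanted. The base case $n = 1$ is then disposed of by inspecting $\BB_1 = \{[1],[-1]\}$: the element $[1]$ has $\inv_B = 0$ and $\des_B = 0$, while $[-1]$ has $\inv_B = 1$ and $\des_B = 1$, which reproduces exactly the stated initial values $B^+_{1,0} = 1$, $B^+_{1,1} = 0$, $B^-_{1,0} = 0$, $B^-_{1,1} = 1$.

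There is no genuine obstacle here; the only step demanding any attention is the elementary binomial manipulation in the middle, and even that reduces to a single application of Pascal's rule. I would also remark on a purely combinatorial alternative: inserting $\pm n$ into the window of a signed permutation of $[n-1]$ in one of its $2n$ slots, and bookkeeping how $\des_B$ and the parity of $\inv_B$ change, produces the ``raw'' recurrence $B^+_{n,k} = (k+1)B^+_{n-1,k} + kB^-_{n-1,k} + (n-k)B^+_{n-1,k-1} + (n-k+1)B^-_{n-1,k-1}$; this too rearranges into the stated form after a single use of Reiner's identity, but the algebraic derivation above is the cleaner of the two.
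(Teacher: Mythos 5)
Your derivation is correct, and it is worth noting that the paper itself does not prove this lemma at all: it simply cites it from an earlier paper of Dey and Sivasubramanian (\cite[Lemma 34]{siva-dey-gamma_positive_descents_alt_group_ejc}), where it is obtained by a direct combinatorial/recursive analysis. What you supply is therefore a genuine, self-contained alternative: writing $B^{\pm}_{n,k}=\tfrac12\bigl(B_{n,k}\pm(-1)^k\binom{n}{k}\bigr)$ from the trivial identity $B^+_{n,k}+B^-_{n,k}=B_{n,k}$ together with Reiner's Theorem~\ref{thm:Reinersigndescent}, and then verifying the claimed recurrence by splitting its right-hand side into an Eulerian part (collapsed by Brenti's recurrence $B_{n,k}=(2k+1)B_{n-1,k}+(2(n-k)+1)B_{n-1,k-1}$) and a binomial part (collapsed by Pascal's rule plus $k\binom{n-1}{k}=(n-k)\binom{n-1}{k-1}$). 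I checked the binomial bookkeeping: the coefficients $\tfrac12(1-2k)$ on $(-1)^k\binom{n-1}{k}$ and $\tfrac12(2n-2k+1)$ on $(-1)^k\binom{n-1}{k-1}$ are right, the signs on $B^-_{n-1,k-1}$ are handled correctly (the $(-1)^{k-1}$ flips to $+(-1)^k$), and the boundary cases $k=n$ and the $n=1$ initial values all come out as stated. The symmetry argument reducing the second recurrence to the first is also sound. The one structural point to flag is that your proof makes the recurrence logically downstream of Reiner's alternating-binomial identity, whereas in the cited source the dependency runs the other way (the recurrence is established first and identities like Reiner's can be read off from it); within the present paper this causes no circularity, since Theorem~\ref{thm:Reinersigndescent} is imported independently from Reiner's work, but it does mean your argument could not replace the original proof in the source paper. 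Your closing remark about the ``raw'' insertion recurrence $B^+_{n,k}=(k+1)B^+_{n-1,k}+kB^-_{n-1,k}+(n-k)B^+_{n-1,k-1}+(n-k+1)B^-_{n-1,k-1}$ is consistent (it sums to Brenti's recurrence and differs from the stated form by exactly $(-1)^k\bigl[k\binom{n-1}{k}-(n-k)\binom{n-1}{k-1}\bigr]=0$), though as presented it is only a sketch and would need the descent/parity bookkeeping spelled out to stand on its own.
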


Sivasubramanian in \cite[Theorem 8]{siva-sgn_exc_hyp} enumerated 
the signed excedance polynomial over $\BB_n$ and showed
the following:
\vspace{-1 mm}
\begin{theorem}[Sivasubramanian]
	\label{thm:signedexcbtype}
	For positive integers $n$ and $0 \leq k \leq n$, 
	\begin{align}
	\label{eqn:btypebinomial}
	P^B_{n,k}-Q^B_{n,k} & =  (-1)^k\binom{n}{k}.
	\end{align}
	\vspace{- 3 mm}
	
\end{theorem}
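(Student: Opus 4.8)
The plan is to recast the claim as a signed generating function identity: since
\[
\sum_{k=0}^n\bigl(P^B_{n,k}-Q^B_{n,k}\bigr)t^k=\sum_{\pi\in\BB_n}(-1)^{\inv_B(\pi)}t^{\exc_B(\pi)},
\]
the identity $P^B_{n,k}-Q^B_{n,k}=(-1)^k\binom{n}{k}$ is equivalent to $\sum_{\pi\in\BB_n}(-1)^{\inv_B(\pi)}t^{\exc_B(\pi)}=(1-t)^n$. I would first record the elementary fact that, writing $\sigma=|\pi|\in\SSS_n$ for the underlying unsigned permutation, $(-1)^{\inv_B(\pi)}=\sgn(\sigma)\,(-1)^{|\Negs(\pi)|}$, which is precisely the determinant of the $n\times n$ signed permutation matrix $M_\pi$. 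This makes a determinantal attack natural, in the spirit of Sivasubramanian's determinant proof of the Type A analogue (Theorem \ref{thm:differenceofoddandeven}, via \cite{siva-exc-det}). Note that the result is not immediate from Reiner's Theorem \ref{thm:Reinersigndescent} together with the Type B MacMahon bijection of Theorem \ref{thm:brenti_fft}, since that bijection preserves $|\Negs|$ but not the parity of $\inv_B$.

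The main obstacle is that Brenti's $\exc_B$ is not a local statistic: whether $i$ is an excedance depends on $\pi_{|\pi(i)|}$, not merely on the pair $(i,\pi(i))$, so one cannot directly write $t^{\exc_B(\pi)}=\prod_i w(i,\pi(i))$ and collapse the sum into a single determinant. I would get around this by reindexing the excedance set by $m=|\pi(i)|$, which runs over $[n]$ as $i$ does; the defining inequality $\pi_{|\pi(i)|}>\pi_i$ then becomes a condition on $\pi_m$ alone together with $\delta_m:=\sgn(\pi^{-1}(m))$, namely $\pi_m>\delta_m m$, with the boundary case $\pi_m=-m$ absorbed by the term $|\{i:\pi_i=-i\}|$. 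Using this reformulation I would build an explicit matrix (most cleanly a $2n\times 2n$ matrix indexed by $\pm[n]$, or an $n\times n$ matrix bordered appropriately) whose determinantal expansion reproduces $\sum_{\pi\in\BB_n}(-1)^{\inv_B(\pi)}t^{\exc_B(\pi)}$, and then evaluate that determinant by row and column operations; I expect it to telescope to $(1-t)^n$, the determinant evaluation itself being routine once the matrix is correct.

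As a fallback --- and perhaps a cleaner route --- I would instead prove the excedance analogue of the descent recurrences of Lemma \ref{lemma:rec_exc_type_b}, namely
\begin{align*}
P^B_{n,k}&=2k\,Q^B_{n-1,k}+(2n-2k+1)\,Q^B_{n-1,k-1}+P^B_{n-1,k},\\
Q^B_{n,k}&=2k\,P^B_{n-1,k}+(2n-2k+1)\,P^B_{n-1,k-1}+Q^B_{n-1,k},
\end{align*}
by tracking how $\exc_B$ and the parity of $\inv_B$ behave when the letter $\pm n$ is deleted from a signed permutation in $\BB_n$. Granting these, setting $R_{n,k}=P^B_{n,k}-Q^B_{n,k}$ and subtracting gives $R_{n,k}=(1-2k)R_{n-1,k}-(2n-2k+1)R_{n-1,k-1}$, and one checks directly (using $k\binom{n-1}{k}=(n-k)\binom{n-1}{k-1}$) that $(-1)^k\binom{n}{k}$ satisfies this recurrence with the correct base case $R_{1,0}=1$, $R_{1,1}=-1$; induction on $n$ then finishes. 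In both approaches the real work is the bookkeeping for Brenti's non-local excedance --- the reformulation and choice of matrix in the determinant route, and the insertion/deletion recurrence (including the $\inv_B$-parity change) in the recurrence route.
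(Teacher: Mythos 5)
First, note that the paper does not actually prove Theorem \ref{thm:signedexcbtype}: it is imported verbatim from Sivasubramanian \cite[Theorem 8]{siva-sgn_exc_hyp}, whose proof is by determinant enumeration, so your primary route is in the spirit of the cited source while your fallback is genuinely different. Your framing is sound: the reduction to $\sum_{\pi\in\BB_n}(-1)^{\inv_B(\pi)}t^{\exc_B(\pi)}=(1-t)^n$, the identity $(-1)^{\inv_B(\pi)}=\sgn(|\pi|)\,(-1)^{|\Negs(\pi)|}$, the (correct) warning that Theorem \ref{thm:Reinersigndescent} cannot simply be transported through the bijection of Theorem \ref{thm:brenti_fft}, and the algebraic check that $(-1)^k\binom{n}{k}$ satisfies $R_{n,k}=(1-2k)R_{n-1,k}-(2n-2k+1)R_{n-1,k-1}$ with base case $R_{1,0}=1$, $R_{1,1}=-1$ are all right.

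However, both branches stop exactly where the work begins, so as written there is a gap. In the determinant branch, your own reindexing shows that whether $m$ contributes to $\exc_B$ depends jointly on $\pi_m$ and on $\sgn(\pi^{-1}(m))$, i.e.\ on the cell in row $m$ and the cell in column $m$ of the signed permutation matrix; such a coupled condition is not of the product form $\prod_i w(i,\pi(i))$ that a determinant expansion supplies, so producing a matrix whose expansion equals the signed excedance polynomial is precisely the content of the theorem and cannot be settled by ``I expect it to telescope.'' In the recurrence branch, the two displayed recurrences are exactly Lemma \ref{lemma:rec_exc2_type_b}, which the paper derives \emph{from} Theorem \ref{thm:signedexcbtype} (together with Theorem \ref{thm:Reinersigndescent} and Lemma \ref{lemma:rec_exc_type_b}); you rightly propose to prove them independently by insertion/deletion of $\pm n$, avoiding the circularity, but that argument --- tracking Brenti's non-local $\exc_B$ and the parity change of $\inv_B$ under insertion, presumably into the cycle decomposition --- is the entire proof and is not supplied. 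Either branch would yield a valid proof once the deferred construction is actually carried out; neither is complete here.
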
 

\vspace{- 3 mm}

By Theorem \ref{thm:brenti_fft}, we have $E^B_{n,k}=B_{n,k}$.  
From Theorem \ref{thm:Reinersigndescent} and Theorem 
\ref{thm:signedexcbtype}, 
we have $P^B_{n,k}-Q^B_{n,k}=
B^+_{n,k}-B^-_{n,k}$.  The coefficients $P^B_{n,k}$ and $Q^B_{n,k}$ 
also satisfy same initial conditions, hence we have $P^B_{n,k}=B^+_{n,k}$ 
and $Q^B_{n,k}=B^-_{n,k}$. Thus, we get the following lemma.  

\begin{lemma} 
	\label{lemma:rec_exc2_type_b}
	For positive integers $n$ and $1\leq k \leq n$, 
	$P^B_{n,k}$ and $Q^B_{n,k}$ satisfy the following 
	recurrence relations:
	\begin{enumerate}
		\item $P^B_{n,k}= 2kQ^B_{n-1,k} + (2n-2k+1)Q^B_{n-1,k-1} + P^B_{n-1,k}, $
		\item $Q^B_{n,k}= 2kP^B_{n-1,k} + (2n-2k+1)P^B_{n-1,k-1} + Q^B_{n-1,k}. $
	\end{enumerate}
where $P_{1,0}^B=1$, $P_{1,1}^B=0$, $Q_{1,0}^B=0$ and $Q_{1,1}^B=1$ .
\end{lemma}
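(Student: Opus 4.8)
The plan is to identify the excedance coefficients $P^B_{n,k}$ and $Q^B_{n,k}$ with the signed-descent coefficients $B^+_{n,k}$ and $B^-_{n,k}$; once that identification is in hand, the two recurrences are nothing but Lemma~\ref{lemma:rec_exc_type_b} rewritten. First I would record two sum-and-difference pairs. Trivially $P^B_{n,k}+Q^B_{n,k}=E^B_{n,k}$ and $B^+_{n,k}+B^-_{n,k}=B_{n,k}$, while Theorem~\ref{thm:signedexcbtype} gives $P^B_{n,k}-Q^B_{n,k}=(-1)^k\binom{n}{k}$ and Theorem~\ref{thm:Reinersigndescent} gives $B^+_{n,k}-B^-_{n,k}=(-1)^k\binom{n}{k}$, so the two differences coincide.

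Next I would invoke Brenti's Type B analogue of MacMahon's theorem, namely the equidistribution $E^B_{n,k}=B_{n,k}$ (which follows from the refined bijection of Theorem~\ref{thm:brenti_fft}). Combining the displays, $2P^B_{n,k}=E^B_{n,k}+(-1)^k\binom{n}{k}=B_{n,k}+(-1)^k\binom{n}{k}=2B^+_{n,k}$, hence $P^B_{n,k}=B^+_{n,k}$, and symmetrically $Q^B_{n,k}=B^-_{n,k}$, for all $n$ and all $0\le k\le n$. Substituting these equalities into the two recurrences of Lemma~\ref{lemma:rec_exc_type_b} yields precisely the asserted recurrences for $P^B_{n,k}$ and $Q^B_{n,k}$. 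For the base case I would simply list the two elements of $\BB_1$: the identity lies in $\BB_1^+$ with $\exc_B=0$, while the map $1\mapsto -1$ lies in $\BB_1^-$ and has $\exc_B=1$ since $\pi_1=-1$; this gives $P^B_{1,0}=1$, $P^B_{1,1}=0$, $Q^B_{1,0}=0$, $Q^B_{1,1}=1$, in agreement with the stated initial values of $B^{\pm}_{1,k}$.

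There is no substantial technical obstacle here; the proof is essentially a linear-algebra identity between two pairs of sequences. If one insists on naming the delicate point, it is that the identification $P^B_{n,k}=B^+_{n,k}$ genuinely needs all three ingredients together — the matching difference identities, the equidistribution $E^B_{n,k}=B_{n,k}$, and the agreement of initial data — since any two of them without the third would leave the split between the even-length and odd-length parts undetermined.
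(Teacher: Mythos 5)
Your proposal is correct and follows essentially the same route as the paper: identify $P^B_{n,k}=B^+_{n,k}$ and $Q^B_{n,k}=B^-_{n,k}$ by combining Brenti's equidistribution $E^B_{n,k}=B_{n,k}$ with the matching difference identities of Theorems \ref{thm:Reinersigndescent} and \ref{thm:signedexcbtype}, and then read off the recurrences from Lemma \ref{lemma:rec_exc_type_b}. If anything, your explicit computation $2P^B_{n,k}=E^B_{n,k}+(-1)^k\binom{n}{k}=B_{n,k}+(-1)^k\binom{n}{k}=2B^+_{n,k}$ is slightly cleaner than the paper's appeal to matching initial conditions, since the sum and difference identities alone already pin down the identification.
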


%
%

From Theorem \ref{thm:signedexcbtype}, we immediately have the following 
corollary. 
\begin{corollary}
	\label{cor:ratioalternatingtypeb}
	For positive integers $n$, the sequences $P^B_{n,k}$ and 
	$Q^B_{n,k}$ are ratio-alternating. Moreover, they satisfy \eqref{eqn:ratioalt2}. 
\end{corollary}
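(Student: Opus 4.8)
The plan is to imitate verbatim the argument used for the Type A statement in Corollary \ref{cor:ratioalternatingofevenexcedance}, with the binomial identity of Mantaci replaced by its Type B analogue. Concretely, I would set $A = P^B_n = (P^B_{n,k})_{k=0}^n$ and $B = Q^B_n = (Q^B_{n,k})_{k=0}^n$ in Definition \ref{def:ratio-alt} and aim to verify condition \eqref{eqn:ratioalt2}, namely $P^B_{n,2i} \geq Q^B_{n,2i}$ for every even index and $P^B_{n,2i+1} \leq Q^B_{n,2i+1}$ for every odd index in the range $0 \leq k \leq n$.

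The key (and essentially only) ingredient is Theorem \ref{thm:signedexcbtype}, which gives $P^B_{n,k} - Q^B_{n,k} = (-1)^k \binom{n}{k}$ for all $0 \leq k \leq n$. Since $\binom{n}{k} \geq 0$, reading this identity according to the parity of $k$ yields $P^B_{n,k} - Q^B_{n,k} = \binom{n}{k} \geq 0$ when $k$ is even and $P^B_{n,k} - Q^B_{n,k} = -\binom{n}{k} \leq 0$ when $k$ is odd. These are exactly the two families of inequalities in \eqref{eqn:ratioalt2}, so $P^B_n$ and $Q^B_n$ are ratio-alternating and in fact satisfy \eqref{eqn:ratioalt2}. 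The only bookkeeping point is to note that Theorem \ref{thm:signedexcbtype} is stated for the full index range $0 \leq k \leq n$, which is precisely the support of the sequences $P^B_n$ and $Q^B_n$, so no index at either end is left unaccounted for.

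There is no genuine obstacle here: the statement is a one-line consequence of Theorem \ref{thm:signedexcbtype} together with the nonnegativity of binomial coefficients, just as Corollary \ref{cor:ratioalternatingofevenexcedance} followed from Theorem \ref{thm:differenceofoddandeven}. (The substantive work comes later, when this corollary is combined with an even/odd log-concavity input and Theorem \ref{thm:relationbetweenratioaltandstrongsynchro} to derive strong synchronisation in Theorem \ref{thm:MainresultforevenlogconcaveTypeB}.)
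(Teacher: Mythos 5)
Your proposal is correct and matches the paper exactly: the paper derives this corollary immediately from Theorem \ref{thm:signedexcbtype} by reading the sign of $(-1)^k\binom{n}{k}$ according to the parity of $k$, just as Corollary \ref{cor:ratioalternatingofevenexcedance} follows from Theorem \ref{thm:differenceofoddandeven}. No gaps.
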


We first prove Lemma \ref{lemma:ti_B} which is analogous to Lemma \ref{lemma:t_i}.

\begin{lemma}
	\label{lemma:ti_B}
	
	For positive integers $n$ and $1 \leq k \leq n-1$, the 
	coefficients  $P^B_{n,k}$ satisfy the following: 
	\vspace{- 2 mm}
	\begin{align*}	
	\label{eq:pnkassumoftiB}
	(P^B_{n,k}) ^2- P^B_{n,k+1}P^B_{n,k-1}  =  \sum_{i=1}^{9}T_i^B(n,k)
	\end{align*}
	where 
	\begin{align*}
	T_1^B(n,k) &  =  4(k^2-1)[(Q^B_{n-1,k})^2-Q^B_{n-1,k+1}Q^B_{n-1,k-1}],\\
	T_2^B(n,k) &  =  4(Q^B_{n-1,k})^2 + 4(Q^B_{n-1,k-1})^2 - 8Q_{n-1,k-1}Q_{n-1,k} ,\\
	T_3^B(n,k) &  =  ((2n-2k+1)^2-4)[(Q^B_{n-1,k-1})^2-Q^B_{n-1,k}Q^B_{n-1,k-2}],\\
	T_4^B(n,k)& =  (P^B_{n-1,k})^2-P^B_{n-1,k+1}P^B_{n-1,k-1},\\
	T_5^B(n,k) & =  2(k+1)(2n-2k+3)(Q^B_{n-1,k}Q^B_{n-1,k-1}- Q^B_{n-1,k+1}Q^B_{n-1,k-2}), \\
	T_6^B(n,k) & =   2(k-1)(2n-2k-1)(Q^B_{n-1,k-1}Q^B_{n-1,k}- Q^B_{n-1,k-1}Q^B_{n-1,k}), \\
	T_7^B(n,k) & =  (2n-2k-1)(Q^B_{n-1,k-1}P^B_{n-1,k}- Q^B_{n-1,k}P^B_{n-1,k-1}),\\
	T_8^B(n,k) & =  (2n-2k+3)(Q^B_{n-1,k-1}P^B_{n-1,k}- Q^B_{n-1,k-2}P^B_{n-1,k+1}), \\
	T_9^B(n,k) & =  4kQ^B_{n-1,k}P^B_{n-1,k}- 2(k+1)Q^B_{n-1,k+1}P^B_{n-1,k-1}-2(k-1)Q^B_{n-1,k-1}P^B_{n-1,k+1}. 
	\end{align*}
	
\end{lemma}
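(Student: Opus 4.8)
The plan is to mirror the proof of Lemma \ref{lemma:t_i} verbatim, since the Type B recurrences in Lemma \ref{lemma:rec_exc2_type_b} have exactly the same shape as the Type A recurrences in Lemma \ref{recurences of Mantaci}, only with the multiplicative coefficients $k$ and $n-k$ replaced by $2k$ and $2n-2k+1$. Concretely, I would start from
\begin{align*}
P^B_{n,k} = 2k\,Q^B_{n-1,k} + (2n-2k+1)\,Q^B_{n-1,k-1} + P^B_{n-1,k},
\end{align*}
square it to obtain the analogue of \eqref{eq:pnksqaure}, namely an expression for $(P^B_{n,k})^2$ as a sum of six terms: $4k^2 (Q^B_{n-1,k})^2$, $(2n-2k+1)^2 (Q^B_{n-1,k-1})^2$, $(P^B_{n-1,k})^2$, $2 \cdot 2k(2n-2k+1) Q^B_{n-1,k}Q^B_{n-1,k-1}$, $2 \cdot 2k\, Q^B_{n-1,k}P^B_{n-1,k}$, and $2(2n-2k+1) Q^B_{n-1,k-1}P^B_{n-1,k}$.

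Next I would write out $P^B_{n,k+1}P^B_{n,k-1}$ by applying the recurrence at $k+1$ and at $k-1$ and multiplying; this produces the Type B analogue of \eqref{eq:pnkplusoneminusone}, a sum of nine cross-terms involving products of $Q^B_{n-1,\bullet}$ and $P^B_{n-1,\bullet}$ with coefficients that are products of the linear factors $2(k+1)$, $2(k-1)$, $2(n-k)+1 = 2n-2k+1$, $2(n-k)+3 = 2n-2k+3$ evaluated appropriately. Then I subtract the second expansion from the first and regroup the terms exactly along the same nine-fold partition used in Lemma \ref{lemma:t_i}: collect the $(k^2-1)$-type coefficient onto the log-concavity defect of $Q^B$ to get $T_1^B$; collect the perfect-square remainder $4(Q^B_{n-1,k})^2 + 4(Q^B_{n-1,k-1})^2 - 8Q^B_{n-1,k-1}Q^B_{n-1,k}$ into $T_2^B$; pair the $(2n-2k+1)^2-4$ coefficient with the $Q^B$ log-concavity defect shifted by one to get $T_3^B$; isolate the $P^B$ log-concavity defect as $T_4^B$; and distribute the remaining mixed cross-terms into $T_5^B,\dots,T_9^B$ matching the stated groupings. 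The identity is purely formal — it is a rearrangement of polynomials in the symbols $P^B_{n-1,\bullet}, Q^B_{n-1,\bullet}$ — so there is no inequality to verify here, only bookkeeping.

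The main obstacle is purely the bookkeeping: one must check that the coefficients $4(k^2-1)$, $(2n-2k+1)^2-4$, $2(k+1)(2n-2k+3)$, $2(k-1)(2n-2k-1)$, $2n-2k-1$, $2n-2k+3$, and $4k, 2(k+1), 2(k-1)$ appearing in $T_1^B$ through $T_9^B$ are precisely what falls out after the subtraction and regrouping, with nothing left over. In particular one should confirm that the quadratic coefficients split as $4k^2 = 4(k^2-1) + 4$ (the $+4$ feeding $T_2^B$) and $(2n-2k+1)^2 = [(2n-2k+1)^2 - 4] + 4$ (the $+4$ also feeding $T_2^B$), and that the linear-times-linear products $2 \cdot 2k \cdot (2n-2k+1)$ from the square decompose against $2(k+1)(2n-2k-1) + \dots$ from the cross product to leave exactly $T_5^B + T_6^B$ plus the perfect-square piece. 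Since $T_6^B(n,k) = 0$ identically (its two $Q^B$-products cancel), this term is only a placeholder kept for symmetry with Lemma \ref{lemma:t_i}, which simplifies the check slightly. Once the identity is established, it will be used in the subsequent induction exactly as \eqref{eq:pnkassumofti} was used in the proof of Theorem \ref{thm:MainresultforevenlogconcaveTypeA}, with Corollary \ref{cor:ratioalternatingtypeb} supplying the ratio-alternating structure.
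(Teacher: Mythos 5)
Your proposal is correct and follows exactly the paper's route: the paper's own proof is a one-line remark that one expands $(P^B_{n,k})^2$ and $P^B_{n,k+1}P^B_{n,k-1}$ via the recurrences of Lemma \ref{lemma:rec_exc2_type_b} and regroups, precisely as in Lemma \ref{lemma:t_i}, and your expansion and nine-fold regrouping (including the splits $4k^2 = 4(k^2-1)+4$ and $(2n-2k+1)^2 = [(2n-2k+1)^2-4]+4$ feeding $T_2^B$, and $(2n-2k-1)+(2n-2k+3) = 2(2n-2k+1)$ splitting across $T_7^B$ and $T_8^B$) checks out. The only blemish is a harmless slip in your prose where the $Q^B_{n-1,k}Q^B_{n-1,k-1}$ coefficient coming from the cross product should read $2(k-1)(2n-2k-1)$ rather than $2(k+1)(2n-2k-1)$.
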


\begin{proof}
This proof follows by calculating $(P^B_{n,k}) ^2$ and $P^B_{n,k+1}P^B_{n,k-1}$ 
using the recurrences in Lemma  \ref{lemma:rec_exc_type_b}, 
as was done in the proof 
of Lemma \ref{lemma:ti_B} .
\end{proof}

Now we are in a position to prove our main result of this section. 

\vspace{2 mm}

\noindent
{\bf Proof of Theorem \ref{thm:MainresultforevenlogconcaveTypeB}:} 
	We prove this by induction along the same lines as in the 
	proof of Theorem \ref{thm:MainresultforevenlogconcaveTypeA}. 
	By induction assume that 
	$P^B_{n-1}$ and $Q^B_{n-1}$ are strongly synchronised 
	in $k$. 
	Hence, for
	$1 \leq k \leq n-1$ with $k$ odd, 
	$(P^B_{n-1,k})^2 \geq P^B_{n-1,k-1}P^B_{n-1,k+1}$ and for $1 \leq k \leq n-1$ 
	with $k$ even, 
	$(Q^B_{n-1,k})^2 \geq Q^B_{n-1,k-1}Q^B_{n-1,k+1}$. 
	Proceeding along the same line as in proof of Theorem \ref{thm:MainresultforevenlogconcaveTypeA}, we get that for 
	odd $k$ with $0 \leq k \leq n$, we have $(P^B_{n,k})^2 \geq P^B_{n,k-1}P^B_{n,k+1}$.
	 In an identical manner 
	we can get $(Q^B_{n,k})^2 \geq Q^B_{n,k-1}Q_{n,k+1}$ for 
	even $k$ with $0 \leq k \leq n$. Thus, the sequence $P^B_n$ 
	is odd log-concave and $Q^B_n$ is even log-concave. By Corollary \ref{cor:ratioalternatingofevenexcedance}, $P^B_{n}$ and 
	$Q^B_{n}$ are ratio-alternating, hence by 
	Theorem \ref{thm:relationbetweenratioaltandstrongsynchro}, 
	the sequences $P^B_{n}$ 
	and $Q^B_{n}$ are strongly synchronised.
{\hspace*{\fill}{\eod}}	 	

As the sequences $P_{n,k}$, $Q_{n,k}$, $P^B_{n,k}$ and 
$Q^B_{n,k}$ don't have any internal zeroes, hence log-concavity 
of those polynomials directly gives the following result. 
\begin{corollary}
	\label{thm:unimodality}
	For positive integers $n$, the sequences $P_{n,k}$, $Q_{n,k}$, 
	$P^B_{n,k}$ 
	and $Q^B_{n,k}$ are unimodal.
\end{corollary}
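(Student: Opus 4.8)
The plan is to obtain unimodality as an immediate consequence of the log-concavity already established: Theorem \ref{thm:MainresultforevenlogconcaveTypeA} gives that $P_n$ and $Q_n$ are log-concave, and Theorem \ref{thm:MainresultforevenlogconcaveTypeB} does the same for $P^B_n$ and $Q^B_n$. By the standard fact that a non-negative log-concave sequence with no internal zero is unimodal (the one recalled in the Introduction: once there is no internal zero, the successive ratios are non-increasing on the support, so the sequence rises then falls), the entire task reduces to checking that each of the four sequences has no \emph{internal zero}, i.e.\ that its support is a contiguous block of indices.

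First I would pin down the support of $P_n$ and $Q_n$. Combining Mantaci's identity (Theorem \ref{thm:differenceofoddandeven}) with $A_{n,k}=E_{n,k}=P_{n,k}+Q_{n,k}$ gives $P_{n,k}=\tfrac12\bigl(A_{n,k}+(-1)^k\binom{n-1}{k}\bigr)$ and $Q_{n,k}=\tfrac12\bigl(A_{n,k}-(-1)^k\binom{n-1}{k}\bigr)$, so both are strictly positive for every $k$ with $1\le k\le n-2$ as soon as $A_{n,k}>\binom{n-1}{k}$ on that range. This strict bound follows by a short induction on $n$ from the Eulerian recurrence $A_{n,k}=(k+1)A_{n-1,k}+(n-k)A_{n-1,k-1}$ together with $\binom{n-1}{k}=\binom{n-2}{k}+\binom{n-2}{k-1}$ (with equality $A_{n,k}=\binom{n-1}{k}=1$ only at the endpoints $k=0$ and $k=n-1$). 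Using in addition $P_{n,0}=1$, $Q_{n,0}=0$ (Remark \ref{remark:boundary_cases_type_a}) and the endpoint values $P_{n,n-1},Q_{n,n-1}\in\{0,1\}$ forced by Mantaci's identity, the support of $P_n$ is $\{0,1,\dots,n-2\}$ or $\{0,1,\dots,n-1\}$ and the support of $Q_n$ is $\{1,\dots,n-2\}$ or $\{1,\dots,n-1\}$ — in every case an interval, hence no internal zero. (One could instead read off this interior positivity directly from the recurrences of Lemma \ref{recurences of Mantaci} by induction.)

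For type B the argument is structurally identical: using Theorem \ref{thm:signedexcbtype}, the relation $B_{n,k}=E^B_{n,k}=P^B_{n,k}+Q^B_{n,k}$, and the type B Eulerian recurrence $B_{n,k}=(2k+1)B_{n-1,k}+(2(n-k)+1)B_{n-1,k-1}$, the same induction yields $B_{n,k}>\binom{n}{k}$ for $1\le k\le n-1$, whence $P^B_{n,k}=\tfrac12\bigl(B_{n,k}+(-1)^k\binom{n}{k}\bigr)$ and $Q^B_{n,k}=\tfrac12\bigl(B_{n,k}-(-1)^k\binom{n}{k}\bigr)$ are positive there; together with the boundary values ($P^B_{n,0}=1$, $Q^B_{n,0}=0$, and $P^B_{n,n},Q^B_{n,n}\in\{0,1\}$ forced by Theorem \ref{thm:signedexcbtype} and $E^B_{n,n}=1$) the supports of $P^B_n$ and $Q^B_n$ are again intervals. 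For each of the four sequences, log-concavity plus the absence of internal zeros then gives unimodality.

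The only genuine content is the positivity step, namely the strict interior bounds $A_{n,k}>\binom{n-1}{k}$ and $B_{n,k}>\binom{n}{k}$, and even that is a routine one-line induction; everything else is a direct appeal to results already proved. So I do not expect a real obstacle here — the single point requiring care is simply to make the "no internal zero" claim precise, since log-concavity by itself does not preclude internal zeros (e.g.\ $(1,0,0,1)$).
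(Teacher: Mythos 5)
Your proposal is correct and takes essentially the same route as the paper: log-concavity from Theorems \ref{thm:MainresultforevenlogconcaveTypeA} and \ref{thm:MainresultforevenlogconcaveTypeB} combined with the absence of internal zeros gives unimodality. The only difference is that the paper merely asserts that the four sequences have no internal zeroes, while you supply the (correct) verification via the identities $P_{n,k}-Q_{n,k}=(-1)^k\binom{n-1}{k}$ and $P^B_{n,k}-Q^B_{n,k}=(-1)^k\binom{n}{k}$ together with the interior bounds $A_{n,k}>\binom{n-1}{k}$ and $B_{n,k}>\binom{n}{k}$; this fills a detail the paper leaves implicit but does not change the argument.
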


\section{Modification of Sagan's Theorem}
\label{sec:sagan}

Given a triangular array of non-negative integers, Sagan in \cite[Theorem 1]{saganinductivelogconcavity} gave the following condition which ensures 
that every row of the array is log-concave.

\begin{theorem}[Sagan]
	\label{thm:sagan}
	Suppose that for $n \geq 1$ and $0 \leq k \leq n$, 
	a non-negative integral sequence $t_{n,k}$ 
	satisfies the following triangular recurrence relation: $t_{n,k}=c_{n,k}t_{n-1,k-1}+d_{n,k}t_{n-1,k}$ where the 
	multiplicative coefficients $c_{n,k},d_{n,k}$ are all 
	non-negative integers and $t_{a,b}=0$ whenever $a < b$. 
	Suppose the following conditions hold: 
	(i) For each positive integer $n$, $c_{n,k}$ and $d_{n,k}$ are 
	log-concave in $k$.  
	(ii) $c_{n,k-1}d_{n,k+1}+c_{n,k+1}d_{n,k-1} \leq 2c_{n,k}d_{n,k}$ 
	for all $n \geq 1$ and $0 \leq k \leq n$. 
	Then, for each positive integer $n$, the sequence $t_{n,k}$ is 
	log-concave in $k$.
\end{theorem}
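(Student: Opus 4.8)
The plan is to prove Sagan's Theorem by induction on $n$, following the same computational strategy already used for Lemma~\ref{lemma:t_i} and Lemma~\ref{lemma:ti_B}: expand $t_{n,k}^2 - t_{n,k+1}t_{n,k-1}$ via the triangular recurrence, then group the resulting terms into pieces each of which is manifestly non-negative under the hypotheses. The base case $n=1$ is trivial since a sequence of length at most two is automatically log-concave. For the inductive step, I assume $t_{n-1,k}$ is log-concave in $k$ and must deduce the same for $t_{n,k}$.

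First I would substitute $t_{n,k}=c_{n,k}t_{n-1,k-1}+d_{n,k}t_{n-1,k}$ into both $t_{n,k}^2$ and $t_{n,k+1}t_{n,k-1}$ and subtract. After collecting terms, the difference $t_{n,k}^2 - t_{n,k+1}t_{n,k-1}$ is bounded below by
\begin{align*}
&(c_{n,k}^2 - c_{n,k+1}c_{n,k-1})\,t_{n-1,k-1}^2 + (d_{n,k}^2 - d_{n,k+1}d_{n,k-1})\,t_{n-1,k}^2 \\
&\qquad + (2c_{n,k}d_{n,k} - c_{n,k-1}d_{n,k+1} - c_{n,k+1}d_{n,k-1})\,t_{n-1,k-1}t_{n-1,k},
\end{align*}
where in passing to this lower bound I have used the inductive log-concavity of $t_{n-1}$ to replace cross terms like $t_{n-1,k}t_{n-1,k-2}$ by $t_{n-1,k-1}^2$ and $t_{n-1,k+1}t_{n-1,k-1}$ by $t_{n-1,k}^2$ (which is legitimate because the coefficients $c_{n,k+1}c_{n,k-1}$ and $d_{n,k+1}d_{n,k-1}$ appearing with a minus sign are non-negative). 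By hypothesis (i), the coefficients of $t_{n-1,k-1}^2$ and $t_{n-1,k}^2$ are non-negative, and by hypothesis (ii) the coefficient of the cross term $t_{n-1,k-1}t_{n-1,k}$ is non-negative as well. Since all the $t_{n-1,\cdot}$ are non-negative, every term in the displayed lower bound is non-negative, so $t_{n,k}^2 \geq t_{n,k+1}t_{n,k-1}$, completing the induction.

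The main subtlety — which is really the only thing to be careful about rather than a genuine obstacle — is the boundary behaviour: when $k$ is at the edge of the valid range (e.g.\ $k=0$, or $k$ near $n$), some of the symbols $t_{n-1,k-2}$, $t_{n-1,k+1}$, or $c_{n,k-1}$, $d_{n,k+1}$ may be zero by the convention $t_{a,b}=0$ for $a<b$. In those cases the subtracted cross terms only become smaller, so the lower bound argument still goes through; one should simply remark that the inequality $t_{n,k}^2 \geq t_{n,k+1}t_{n,k-1}$ holds trivially (often both sides are zero or the right side is zero) at the extreme indices. Apart from this bookkeeping, the proof is a direct manipulation, exactly parallel to the argument displayed earlier for the modified version of the theorem. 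Indeed, since hypothesis (ii) here implies condition (ii) of Theorem~\ref{thm:modifiedSagan} (a cruder inequality suffices when the right-hand side is already $\leq 0$ or dominated), one could alternatively deduce this statement as a special case, but giving the self-contained inductive computation is cleaner and matches the exposition.
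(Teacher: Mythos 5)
Your proof is correct and is essentially identical to the argument the paper gives for Theorem~\ref{thm:modifiedSagan}, which the author explicitly states follows the same lines as Sagan's original proof: the same expansion of $t_{n,k}^2 - t_{n,k+1}t_{n,k-1}$, the same inductive replacement of the cross products using log-concavity of $t_{n-1,\cdot}$, and the same grouping into three non-negative pieces, with your hypothesis (ii) making the coefficient of $t_{n-1,k-1}t_{n-1,k}$ non-negative directly instead of via the AM--GM step used in the modified version. The paper does not reprove the cited Theorem~\ref{thm:sagan} itself, so there is nothing further to compare.
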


Looking at Theorem \ref{thm:sagan}, one may not consider this to be much of a labor-saving method, but working with the coefficient arrays in most of the cases is much simpler than working with the original ones. This theorem has some nice applications. This directly gives the log-concavity of binomial coefficients and Stirling Number of both kinds. But this does not 
directly prove the log-concavity of Eulerian Numbers as (ii) is not satisfied. 
Hence we modify Sagan's theorem. The proof of Theorem \ref{thm:modifiedSagan} goes along the same line as the original proof 
of Theorem \ref{thm:sagan} but we give it for completeness. 

\begin{theorem}
	\label{thm:modifiedSagan}
	Suppose that for $n \geq 1$ and $0 \leq k \leq n$, 
	a non-negative integral sequence $t_{n,k}$ 
	satisfies the following triangular recurrence relation: $t_{n,k}=c_{n,k}t_{n-1,k-1}+d_{n,k}t_{n-1,k}$ where 
	$c_{n,k},d_{n,k}$ are all non-negative integers and 
	$t_{a,b}=0$ whenever $a < b$. Suppose the following 
	conditions hold:\\
	(i) For each positive integers $n$, $c_{n,k}$ and $d_{n,k}$ are 
	log-concave in $k$.  \\
	(ii) $2 \sqrt{(c_{n,k}^2-c_{n,k+1}c_{n,k-1})
		(d_{n,k}^2-d_{n,k+1}d_{n,k-1}) }
	\geq c_{n,k-1}d_{n,k+1}+c_{n,k+1}d_{n,k-1}-2c_{n,k}d_{n,k}$ 
	for all $n \geq 1$ and all $0 \leq k \leq n$. \\
	Then, for 
	each positive integers $n$, the sequence $t_{n,k}$ is log-concave 
	in $k$.
\end{theorem}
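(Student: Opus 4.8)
The plan is to mimic the proof of Theorem~\ref{thm:modifiedSagan}'s predecessor structure: assume inductively that $t_{n-1,k}$ is log-concave in $k$, and expand $t_{n,k}^2 - t_{n,k+1}t_{n,k-1}$ using the triangular recurrence $t_{n,k}=c_{n,k}t_{n-1,k-1}+d_{n,k}t_{n-1,k}$. Writing $c=c_{n,k}$, $c^\pm = c_{n,k\pm 1}$ and similarly for $d$, the square gives $c^2 t_{n-1,k-1}^2 + d^2 t_{n-1,k}^2 + 2cd\, t_{n-1,k-1}t_{n-1,k}$, while the cross product $t_{n,k+1}t_{n,k-1}$ produces the terms $c^+c^- t_{n-1,k}t_{n-1,k-2}$, $d^+d^- t_{n-1,k+1}t_{n-1,k-1}$, $c^+d^- t_{n-1,k}t_{n-1,k-1}$, and $c^-d^+ t_{n-1,k-2}t_{n-1,k+1}$.

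The key step is then to group these terms so that log-concavity of $t_{n-1,\cdot}$ can be applied: replace $t_{n-1,k}t_{n-1,k-2}$ by the larger quantity $t_{n-1,k-1}^2$, replace $t_{n-1,k+1}t_{n-1,k-1}$ by $t_{n-1,k}^2$, and replace $t_{n-1,k-2}t_{n-1,k+1}$ by $t_{n-1,k-1}t_{n-1,k}$ (this last one also using the consequence that $a_ja_l \ge a_{j-i}a_{l+i}$ for a log-concave sequence with positive interior, exactly as invoked in Corollary~\ref{cor:strongsynchrnointerlacinglogconcave}). After these substitutions the expression is bounded below by
$$(c^2 - c^+c^-)\,t_{n-1,k-1}^2 + (d^2 - d^+d^-)\,t_{n-1,k}^2 + (2cd - c^-d^+ - c^+d^-)\,t_{n-1,k-1}t_{n-1,k}.$$
Next I would set $A = c^2 - c^+c^- \ge 0$ and $B = d^2 - d^+d^- \ge 0$ (non-negative by hypothesis~(i)), and apply the AM--GM inequality $Ax^2 + By^2 \ge 2\sqrt{AB}\,xy$ with $x = t_{n-1,k-1}$, $y = t_{n-1,k}$. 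This yields a lower bound of $\bigl(2\sqrt{AB} - (c^-d^+ + c^+d^- - 2cd)\bigr)t_{n-1,k-1}t_{n-1,k}$, which is non-negative precisely by hypothesis~(ii). Hence $t_{n,k}^2 \ge t_{n,k+1}t_{n,k-1}$, completing the induction.

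The main obstacle — really the only delicate point — is justifying the substitution $t_{n-1,k-2}t_{n-1,k+1} \le t_{n-1,k-1}t_{n-1,k}$ when some interior entry of the row $t_{n-1,\cdot}$ vanishes, since the clean ``$a_j a_l \ge a_{j-i} a_{l+i}$'' form of log-concavity requires positivity of the intermediate terms. I would handle this either by noting that non-negative integral log-concave sequences arising from such triangular recurrences have no internal zeros (so unimodality and the stronger inequality hold), or by observing that if $t_{n-1,k+1}=0$ or $t_{n-1,k-2}=0$ the offending term is simply zero and the inequality is trivial; the boundary cases where $k-2 < 0$ or $k+1 > n-1$ make the corresponding products vanish by the convention $t_{a,b}=0$ for $a<b$, so they cause no trouble. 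One should also remark, as the paper does after the predecessor theorem, that condition~(ii) here is genuinely weaker than the ``$\le 2c_{n,k}d_{n,k}$'' condition of Theorem~\ref{thm:sagan}, since in that case the right-hand side of~(ii) is $\le 0$ while the left-hand side is always $\ge 0$.
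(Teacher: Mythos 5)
Your proposal follows essentially the same route as the paper's own proof: induction on $n$, expansion of $t_{n,k}^2 - t_{n,k+1}t_{n,k-1}$ via the recurrence, the three log-concavity substitutions, and the AM--GM step $Ax^2+By^2\geq 2\sqrt{AB}\,xy$ combined with hypothesis (ii). The one point where you go beyond the paper is in flagging and patching the substitution $t_{n-1,k-2}t_{n-1,k+1}\leq t_{n-1,k-1}t_{n-1,k}$ when interior entries vanish --- a genuine subtlety (e.g.\ $(1,0,0,1)$ is log-concave but violates the spread-out inequality) that the paper's proof passes over silently, so your extra care is welcome rather than a deviation.
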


\begin{proof}
	We prove this by induction. Assume $t_{n-1,k}$ to be log-concave 
	in $k$. 
	\begin{eqnarray*}
		t_{n,k}^2 - t_{n,k+1}t_{n,k-1}
		&= & c_{n,k}^2
		t_{n-1,k-1}^2 -c_{n,k+1}c_{n,k-1}t_{n-1,k}t_{n-1,k-2} + d_{n,k}^2
		t_{n-1,k}^2
		\\ & &  -d_{n,k+1}d_{n,k-1}t_{n-1,k+1}t_{n-1,k-1} +
		2c_{n,k}d_{n,k}t_{n-1,k-1}t_{n-1,k}
		\\ & & 
		-c_{n,k+1}d_{n,k-1}t_{n-1,k-1}t_{n-1,k}- c_{n,k-1}d_{n,k+1}t_{n-1,k-2}t_{n-1,k+1}
		\\& \geq  & (c_{n,k}^2 - c_{n,k+1}c_{n,k-1} )t_{n-1,k-1}^2 + 
		(d_{n,k}^2 - d_{n,k+1}d_{n,k-1} )t_{n-1,k}^2
		\\ & &
		+(2c_{n,k}d_{n,k}-c_{n,k-1}d_{n,k+1}-c_{n,k+1}d_{n,k-1})
		t_{n-1,k-1}t_{n-1,k}
	\end{eqnarray*}
	By A.M-G.M inequality $Ax^2+By^2  \geq 2 \sqrt{AB}xy $ whenever 
	$A$ and $B$ are non-negative. Let, $A=c_{n,k}^2 - c_{n,k+1}c_{n,k-1}$ 
	and $B=d_{n,k}^2 - d_{n,k+1}d_{n,k-1}$. Then $A$ and $B$ are 
	non-negative due to log-concavity of $c_{n,k}$ and $d_{n,k}$ 
	respectively. Here,
	\begin{eqnarray}
	2\sqrt{AB} & = & \sqrt{4(c_{n,k}^2-c_{n,k+1}c_{n,k-1})(d_{n,k}^2-d_{n,k+1}d_{n,k-1})} \nonumber \\
	& \geq &
	(c_{n,k-1}d_{n,k+1}+c_{n,k+1}d_{n,k-1}-2c_{n,k}d_{n,k}) \nonumber 
	\end{eqnarray}
	Hence, $t_{n,k}^2 - t_{n,k+1}t_{n,k-1}$ is non-negative 
	and so we are done. 
\end{proof}
\begin{remark}
	Note that, Theorem \ref{thm:modifiedSagan} also gives a necessary condition
	 to ensure that every row of a triangular array satisfying that condition
	  will be log-concave. But Theorem \ref{thm:modifiedSagan} is more general  because, if the multiplicative coeffcients 
	$c_{n,k}$ and $d_{n,k}$ satisfy (ii) of Theorem \ref{thm:sagan}, 
	then they certainly  satisfy (ii) of Theorem \ref{thm:modifiedSagan}. But, there are examples (all the examples in the next subsection) for which $c_{n,k}$ and $d_{n,k}$ satisfy  (ii) of Theorem \ref{thm:modifiedSagan} but don't satisfy 
	(ii) of Theorem \ref{thm:sagan}. Here also
	we work with the coefficient arrays instead of the original ones.  
\end{remark}

\subsection{Direct Applications of Modified Sagan's Theorem}

Here, we give some applications of Theorem \ref{thm:modifiedSagan}. At first, we consider some combinatorial sequences, whose log-concavity is already known using real-rootedness or some other tools. But here, we give direct proofs of log-concavity of those sequences. 

\begin{enumerate}
	\item {\bf Log-concavity of Eulerian Numbers:}
	\label{item: Log-concavity of Eulerian Numbers} 
	 Frobenius showed that the Eulerian polynomials $A_n(t)$ are real-rooted (for reference, one can see \cite[Chapter 4]{petersen-eulerian-nos-book}) and hence log-concave.
	 Theorem \ref{thm:modifiedSagan} immediately provides us an alternate proof of the log-concavity of 
	 Eulerian Numbers. We know from
	\cite[Theorem 1.3]{petersen-eulerian-nos-book} that $A_{n,k}$ 
	satisfy the following recurrence:
	$$A_{n,k}=(k+1)A_{n-1,k}+
	(n-k)A_{n-1,k-1}.$$
	 It is easy to see that both  $c_{n,k}=k+1$ and $d_{n,k}=(n-k)$ are log-concave in $k$. Further, 
	\begin{eqnarray*}
	2\sqrt{(c_{n,k}^2-c_{n,k+1}c_{n,k-1})(d_{n,k}^2-d_{n,k+1}d_{n,k-1})} \\ = 
	2 \geq 2= (c_{n,k-1}d_{n,k+1}+c_{n,k+1}d_{n,k-1}-2c_{n,k}d_{n,k}).
	\end{eqnarray*} 
	Thus, $c_{n,k}$ and $d_{n,k}$ satisfy the conditions of
	Theorem \ref{thm:modifiedSagan}. Hence $A_{n,k}$ is log-concave.

	\item {\bf Log-concavity of Type B Eulerian Numbers:}  Let us consider the  Eulerian polynomials of Type B: 
	$B_n(t)= \sum_{\pi \in \BB_n}t^{\des_B(\pi)} =
	\sum_{k=0}^n B_{n,k}t^k$. Brenti \cite{brenti-q-eulerian-94} showed that these polynomials are real-rooted and hence $B_{n,k}$s are log-concave. Theorem \ref{thm:modifiedSagan} gives another proof of log-concavity of the sequence $B_{n,k}$.
	From \cite{brenti-q-eulerian-94}, we get 
	that they satisfy the following recurrence:
	$$B_{n,k}=(2k+1)B_{n-1,k}+
	[2(n-k)+1]B_{n-1,k-1}.$$
	Taking 
	$c_{n,k}=2k+1$ and $d_{n,k}=2(n-k)+1$ works here as both 
	of them are log-concave and 
	\begin{eqnarray*}
	2\sqrt{(c_{n,k}^2-c_{n,k+1}c_{n,k-1})(d_{n,k}^2-d_{n,k+1}d_{n,k-1})} \\
	= 
	8 \geq 8  (c_{n,k-1}d_{n,k+1}+c_{n,k+1}d_{n,k-1}-2c_{n,k}d_{n,k}).
\end{eqnarray*} 
Thus, by Theorem \ref{thm:modifiedSagan}, they are also log-concave. 
	
	\item {\bf Log-concavity of Second order Eulerian Numbers:} Let $Q_n$ be the set of permutations of $\{1,1,2,2,\ldots,n,n\}$ 
	such that for all $i$, 
	entries between two occurences of $i$ are larger than $i$. For a permutation $\pi \in Q_n$,
	let $\DescSet(\pi) = \{i \in [2n-1]: \pi_i > \pi_{i+1} \}$ and 
	$\des(\pi) = |\DescSet(\pi)|$ be its number of descents. 
	Let $H_{n,k}= |\{\pi \in Q_n: \des(\pi)=k\}|$. These numbers $H_{n,k}$ are called as the second-order Eulerian Numbers. Bona in \cite{Bona-realzeroes} proved that the associated polynomials $H_n(t)= \sum_{k=0}^{n}H_{n,k}t^k$ are real-rooted which immediately gives log-concavity of $H_{n,k}$. Here we provide another proof of log-concavity of $H_{n,k}$. From \cite{haglund-visontai-stable-multivariate}, we get that these 
	coefficients $H_{n,k}$ satisfy the following recurrence: 
	$$H_{n,k}=kH_{n-1,k}+(2n-k)H_{n-1,k-1}.$$ Taking 
	$c_{n,k}=k$ and $d_{n,k}=2n-k$ and applying Theorem \ref{thm:modifiedSagan}, we immediately get an alternate proof of the log-concavity of 
	$H_{n,k}$ for all positive integers $n$.
	
	\end{enumerate}
	
	We now turn our attention to palindromic polynomials. A polynomial $f(t)= \sum_{i=0}^n a_it^i$ is said to be palindromic if $a_i=a_{n-i}$ for all $0 \leq i \leq \lfloor n/2 \rfloor . $ A palindromic polynomial $f(t)=\sum_{i=0}^n a_it^i$ is said to be gamma-positive if $f(t)=\sum _{i=0}^{\lfloor n/2 \rfloor} \gamma_{n,i} t^i(1+t)^{n-2i}$ with  $\gamma_{n,i} \geq 0$ for all $ 0 \leq i \leq \lfloor n/2 \rfloor$. 
	 One can see the survey paper of Athanasiadis  \cite{athanasiadis-survey-gamma-positivity} for a good reference on various gamma-positivity results. 

	\begin{enumerate}
	
	\item {\bf Log-concavity of the gamma-coefficients of Type A Eulerian polynomials:} Foata and Sch{\"u}tzenberger in \cite{foata-schutzenberger-eulerian} showed that  
	the Eulerian polynomials of Type A are
	gamma positive. 
	Let $T_{n,k}$ be the coefficient of $t^{2k}(1+t)^{n-1-2k}$ 
	in $A_n(t)$. Foata and Strehl in \cite{foata-strehl} gave a combinatorial interpretation of $T_{n,k}$. They
	proved that $T_{n,k}$ is actually the number of elements 
	in $S_n$ with $k$ descents and no double descents. From
   \cite[Theorem 8]{siva-dey-gamma_positive_descents_alt_group_ejc}
	we get that  these coefficients satisfy the following 
	recurrence: $$T_{n,k}=(k+1)T_{n-1,k}+(2n-4k)T_{n-1,k-1}.$$ 
	Let $c_{n,k}=(k+1)$ and $d_{n,k}=2n-4k$. Then, $c_{n,k}$ and $d_{n,k}$ satisfy the conditions of Theorem \ref{thm:modifiedSagan}. Thus, by Theorem \ref{thm:modifiedSagan}, the sequence $T_{n,k}$ is log-concave for any $n$.

	\item {\bf Log-concavity of gamma-coefficients of Type B Eulerian polynomials:} Chow in 
	\cite[Theorem 4.7]{chow-certain_combin_expansions_eulerian} 
	proved that the Type B Eulerian polynomials $B_n(t)= \sum_{k=0}^{\floor{n/2}}R_{n,k}t^k(1+t)^{n-2k}$ 
	where 
	$R_{n,k}$ satisfies the following recurrence:
	$R_{n,k}=(2k+1)R_{n-1,k}+4(n+1-2k)R_{n-1,k-1}$. We can take  
	$c_{n,k}=(2k+1)$ and $d_{n,k}=4(n+1-2k)$ to get the 
	log-concavity of $R_{n,k}$.

\end{enumerate}

\section{Open Problems}
\label{open_problems}

In this Section, we raise some questions and make some interesting conjectures. Define  $A_{n,k}^+$ and $A_{n,k}^-$ to be the number of 
permutations with $k$ descents in $\AAA_n$ and $\SSS_n - \AAA_n$ 
respectively. Based on data, we make the following conjecture about the 
sequences $A_{n,k}^+$ and $A_{n,k}^-$.

\begin{conjecture}
	\label{conj:descentfortypeA}
	For  positive integers $n$, the sequences 
	$(A_{n,k}^+)_{k=0}^{n-1}$ and $(A_{n,k}^-)_{k=0}^{n-1}$
	are strongly synchronised.
\end{conjecture}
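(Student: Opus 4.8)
The plan is to run the same inductive scheme as in the proof of Theorem~\ref{thm:MainresultforevenlogconcaveTypeA}, but a first obstacle appears immediately: inserting the letter $n$ into a permutation of $[n-1]$ changes $\des$ by an amount depending only on $\des$, yet changes $\inv$ (hence the sign) by an amount depending on the \emph{position} of the inserted letter, so there is no clean analogue of the Mantaci recurrences \eqref{eqn:recurrencesforevenandodd1}--\eqref{eqn:recurrencesforevenandodd2} for $A^+_{n,k}$ and $A^-_{n,k}$. Moreover, unlike the excedance situation, the pair $(A^+_{n,k})$, $(A^-_{n,k})$ is \emph{not} ratio-alternating in general (already $A^+_{5}=(1,14,30,14,1)$ and $A^-_{5}=(0,12,36,12,0)$ violate Definition~\ref{def:ratio-alt}), so Theorem~\ref{thm:relationbetweenratioaltandstrongsynchro} is not available. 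Instead I would start from the Désarménien--Foata closed form for the signed Eulerian polynomial, $a_n(t):=\sum_{\pi\in\SSS_n}(-1)^{\inv(\pi)}t^{\des(\pi)}=(1-t)^{\floor{n/2}}A_{\ceil{n/2}}(t)$, which yields the exact description $A^{\pm}_{n,k}=\tfrac12\bigl(A_{n,k}\pm c_{n,k}\bigr)$ with $c_{n,k}:=[t^k]\bigl((1-t)^{\floor{n/2}}A_{\ceil{n/2}}(t)\bigr)$, hence $\min\{A^+_{n,k},A^-_{n,k}\}=\tfrac12(A_{n,k}-|c_{n,k}|)$ and $\max\{A^+_{n,k},A^-_{n,k}\}=\tfrac12(A_{n,k}+|c_{n,k}|)$.

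With this in hand, Definition~\ref{def:ss} reduces Conjecture~\ref{conj:descentfortypeA} to the single family of inequalities
\[
\bigl(A_{n,k}-|c_{n,k}|\bigr)^2\ \geq\ \bigl(A_{n,k-1}+|c_{n,k-1}|\bigr)\bigl(A_{n,k+1}+|c_{n,k+1}|\bigr),\qquad 1\le k\le n-2,
\]
equivalently $A_{n,k}^2-A_{n,k-1}A_{n,k+1}\ \geq\ 2A_{n,k}|c_{n,k}|-c_{n,k}^2+A_{n,k+1}|c_{n,k-1}|+A_{n,k-1}|c_{n,k+1}|+|c_{n,k-1}c_{n,k+1}|$. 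Here the left-hand side is the log-concavity gap of the Eulerian numbers (positive, e.g.\ from real-rootedness, cf.\ \cite[Chapter~4]{petersen-eulerian-nos-book}), while the right-hand side is a genuinely smaller correction: by the triangle inequality $|c_{n,k}|\le\gamma_{n,k}:=[t^k]\bigl((1+t)^{\floor{n/2}}A_{\ceil{n/2}}(t)\bigr)$, and $\sum_k\gamma_{n,k}=2^{\floor{n/2}}\,\ceil{n/2}!$ is negligible next to $\sum_k A_{n,k}=n!$. So the route is: (i) record the closed form and the reduction above; (ii) prove a ratio bound $\gamma_{n,k}\le\varepsilon_n A_{n,k}$ with $\varepsilon_n\to0$ uniformly in $k$ (including the tails), together with log-concavity/unimodality of $(\gamma_{n,k})_k$; (iii) prove a \emph{quantitative} log-concavity $A_{n,k}^2-A_{n,k-1}A_{n,k+1}\ge\delta_n A_{n,k}^2$ for an explicit $\delta_n$, again uniform in $k$; (iv) combine (ii) and (iii) to get the displayed inequality for all $n\ge N_0$, and verify $n<N_0$ by direct computation.

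The step I expect to be the real difficulty is making (ii) and (iii) simultaneously strong enough near the two tails $k=1$ and $k=n-2$, where $A_{n,k}$ is comparatively small: there one cannot afford the crude global bound on $\gamma_{n,k}$ but needs its rapid decay, and one needs the Eulerian gap $\delta_n$ to be large on a neighbourhood of the tails. Extracting such a quantitative log-concavity from the interlacing of the (negative, real) roots of $A_n(t)$ --- or from an explicit lower bound on $A_{n,k}^2/(A_{n,k-1}A_{n,k+1})$ of ultra-log-concavity type --- is the technical heart, and the boundary cases $k=1$ and $k=n-2$ (where also $|c_{n,n-1}|=1=A_{n,n-1}$, so $A^{\pm}_{n,n-1}\in\{0,1\}$) will likely need their own argument, just as the cases $k=1$ and $k=n-1$ did in the proof of Theorem~\ref{thm:MainresultforevenlogconcaveTypeA}. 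A possible alternative sidestepping the analytic estimates would be a direct injective proof of the displayed inequality exploiting the sign-reversing involutions behind $a_n(t)$ together with reversal of permutations (which swaps $\AAA_n$ and $\SSS_n\setminus\AAA_n$ exactly when $\binom{n}{2}$ is odd --- precisely what makes the palindromy behaviour of $A^{\pm}_n$ depend on $n\bmod 4$); but I expect the estimate-based route to be the more tractable one.
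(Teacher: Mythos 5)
First, a point of calibration: the paper does \emph{not} prove this statement. Conjecture~\ref{conj:descentfortypeA} is stated on the basis of data and left open --- Problem~\ref{prob:combinatorialproof} explicitly asks for a proof --- so there is no proof in the paper to compare yours against. Your preliminary observations are correct and worth recording: insertion of the letter $n$ does not interact with $\inv$ in a position-independent way, so no Mantaci-type recurrence is available; the pair $(A^+_{n,k})$, $(A^-_{n,k})$ is not ratio-alternating (the paper itself notes this in Problem~\ref{prob:combinatorialproof}), so Theorem~\ref{thm:relationbetweenratioaltandstrongsynchro} cannot be invoked; and the D\'esarm\'enien--Foata identity $\sum_{\pi\in\SSS_n}(-1)^{\inv(\pi)}t^{\des(\pi)}=(1-t)^{\floor{n/2}}A_{\ceil{n/2}}(t)$ does give $A^{\pm}_{n,k}=\tfrac12(A_{n,k}\pm c_{n,k})$ and hence the correct reduction of strong synchronisation to your displayed inequality (one checks, e.g., $A^+_5-A^-_5=(1,2,-6,2,1)$, the coefficient sequence of $(1-t)^2A_3(t)$).

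That said, what you have written is a research plan, not a proof, and the gaps sit exactly where you locate them. Step (ii) as stated is false at the extreme indices: $\gamma_{n,0}=A_{n,0}=1$ and $|c_{n,n-1}|=A_{n,n-1}=1$, so no bound $\gamma_{n,k}\le\varepsilon_n A_{n,k}$ with $\varepsilon_n\to 0$ can hold ``uniformly in $k$ including the tails,'' and these indices do enter the target inequality at $k=1$ and $k=n-2$, so the tail analysis cannot be deferred. Step (iii) requires an explicit quantitative log-concavity constant $\delta_n$ for the Eulerian numbers that remains usable near the tails; you neither prove nor cite such a bound, and extracting one from real-rootedness or root interlacing is, as you admit, the technical heart. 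Without explicit $\varepsilon_n$, $\delta_n$ and $N_0$, step (iv) (``verify $n<N_0$ by direct computation'') is vacuous. So the proposal identifies a plausible route and correctly diagnoses why the paper's own excedance argument does not transfer, but the steps that would actually settle the conjecture are missing; carrying them out would resolve an open problem of this paper rather than reproduce an existing argument.
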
 

Conjecture \ref{conj:descentfortypeA} will show log-concavity 
of $A_{n,k}^+$ and $A_{n,k}^-$. Real-rootedness of
the polynomials 
$ A_n^+(t) = \sum_{\pi \in \AAA_n} t^{\des(\pi)}$
and $A_n^-(t) = \sum_{\pi \in \SSS_n - \AAA_n} t^{\des(\pi)}$ was conjectured by Dey and 
Sivasubramanian \cite[Conjecture 48]{siva-dey-gamma_positive_descents_alt_group_ejc} when 
$n \equiv 0,1 \mod 4$ and  extended by 
Fulman, Kim, Lee and Petersen \cite[Conjecture 1.3]{fulman-kim-lee-petersendescentcentrallimit} for all $n$.

Though descents and excedances are not equidistributed over $\AAA_n$, 
they seem to be strongly synchronised over $\AAA_n$ for all $n$, that is,

\begin{conjecture}
	\label{conj:descentandexcedancefortypeA}
	For positive integers $n$, the sequences 
	$(A_{n,k}^+)_{k=0}^{n-1}$ and $(P_{n,k})_{k=0}^{n-1}$
	are strongly synchronised. Similarly, 
	the sequences 
	$(A_{n,k}^-)_{k=0}^{n-1}$ and $(Q_{n,k})_{k=0}^{n-1}$
	are strongly synchronised. 
\end{conjecture}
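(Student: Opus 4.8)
The plan is to reduce the conjecture to a log-concavity statement about the sign-refined descent enumerators alone, by first pinning down the exact sign pattern of the difference between the descent and excedance enumerators. Write $A_n^{\pm}(t)=\sum_k A_{n,k}^{\pm}t^k$, $P_n(t)=\sum_k P_{n,k}t^k$, $Q_n(t)=\sum_k Q_{n,k}t^k$, and let $A_n(t)=\sum_k A_{n,k}t^k$ be the Eulerian polynomial. Since $A_{n,k}^{+}+A_{n,k}^{-}=A_{n,k}=E_{n,k}=P_{n,k}+Q_{n,k}$ by MacMahon's theorem, and since $P_{n,k}-Q_{n,k}=(-1)^k\binom{n-1}{k}$ by Theorem \ref{thm:differenceofoddandeven}, one gets $P_n(t)=\tfrac12[A_n(t)+(1-t)^{n-1}]$ and $A_n^{+}(t)=\tfrac12[A_n(t)+\Delta_n(t)]$, where $\Delta_n(t)=A_n^{+}(t)-A_n^{-}(t)=\sum_{\pi\in\SSS_n}(-1)^{\inv(\pi)}t^{\des(\pi)}$ is the signed Eulerian polynomial. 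The first step is to invoke the classical evaluation of D\'esarm\'enien and Foata, $\Delta_n(t)=(1-t)^{\ceil{(n-1)/2}}(1+t)^{\floor{(n-1)/2}}$. Substituting, and writing $a=\ceil{(n-1)/2}$, $b=\floor{(n-1)/2}$ so that $a+b=n-1$, gives
\[
A_n^{+}(t)-P_n(t)=\tfrac12(1-t)^{a}\big[(1+t)^{b}-(1-t)^{b}\big]=t\,(1-t)^{a}\,g(t^2),
\]
where $g(u)=\sum_{i\ge 0}\binom{b}{2i+1}u^i$ has non-negative coefficients.

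From this identity the ratio-alternating structure is immediate. In the product $t(1-t)^a g(t^2)$ each monomial has degree $1+\ell$ coming from $t(1-t)^a$, carrying sign $(-1)^{\ell}$, shifted by an even power from $g(t^2)$, whose coefficients are non-negative; hence the coefficient of $t^m$ has sign $(-1)^{m-1}$. Thus $A_{n,k}^{+}-P_{n,k}\le 0$ for $k$ even and $\ge 0$ for $k$ odd, so $(A_n^{+},P_n)$ satisfies \eqref{eqn:ratioalt1}. Using $A_n^{-}(t)-Q_n(t)=-(A_n^{+}(t)-P_n(t))$ (which follows from $A_n^{-}=A_n-A_n^{+}$ and $Q_n=A_n-P_n$), the pair $(A_n^{-},Q_n)$ satisfies \eqref{eqn:ratioalt2}. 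Now Theorem \ref{thm:relationbetweenratioaltandstrongsynchro} applies: for $(A_n^{+},P_n)$, strong synchronisation is equivalent to ``$A_n^{+}$ is even log-concave and $P_n$ is odd log-concave'', and for $(A_n^{-},Q_n)$ it is equivalent to ``$A_n^{-}$ is odd log-concave and $Q_n$ is even log-concave''. The odd log-concavity of $P_n$ and the even log-concavity of $Q_n$ were already established inside the proof of Theorem \ref{thm:MainresultforevenlogconcaveTypeA}. Therefore the conjecture reduces \emph{exactly} to the two assertions: $A_n^{+}$ is even log-concave and $A_n^{-}$ is odd log-concave.

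This last reduction is the step I expect to be the main obstacle, since it sits as close to Conjecture \ref{conj:descentfortypeA} as possible: the full log-concavity of $A_n^{\pm}$ asserted there, as well as the conjectured real-rootedness of $A_n^{\pm}(t)$, would each instantly supply the required even/odd log-concavity. My proposed direct attack uses the decomposition $A_{n,k}^{\pm}=\tfrac12(A_{n,k}\pm\delta_{n,k})$, where $\delta_{n,k}=[t^k]\Delta_n(t)$ is explicit from the D\'esarm\'enien--Foata formula. For even $k$ (respectively odd $k$) one expands $(A_{n,k}\pm\delta_{n,k})^2-(A_{n,k-1}\pm\delta_{n,k-1})(A_{n,k+1}\pm\delta_{n,k+1})$, isolates the Eulerian defect $A_{n,k}^2-A_{n,k-1}A_{n,k+1}$, which is strictly positive in the interior, and argues that it dominates the remaining cross terms carrying factors of $\delta$. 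This is plausible because the $\delta_{n,k}$ are bounded by binomial coefficients $\binom{b}{\cdot}$, which are far smaller than the Eulerian numbers away from the two ends, and the small cases $n=4,6$ already show large interior slack. The technical crux is to make the comparison uniform: one must lower-bound the Eulerian log-concavity defect and upper-bound $\delta_{n,k}$ simultaneously, with special care for even $n$ (where $\Delta_n(t)$ has terms of both parities, so the mixed corrections $\delta_{n,k\pm1}$ do not vanish) and for the boundary indices $k\in\{1,n-1\}$, where $A_{n,k}$ is small and the estimate should instead be checked from the explicit values produced by the Eulerian recurrence \eqref{eqn:recurrence_classical_eulerian_recurrence}.
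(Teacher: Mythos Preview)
This statement is listed in the paper as an open \emph{conjecture}; there is no proof in the paper to compare against. Your proposal, however, contains a concrete error that undermines the entire reduction.

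The formula you quote for the signed Eulerian polynomial is wrong. The D\'esarm\'enien--Foata/Wachs evaluation is
\[
\Delta_n(t)=\sum_{\pi\in\SSS_n}(-1)^{\inv(\pi)}t^{\des(\pi)}=(1-t)^{\lfloor n/2\rfloor}\,A_{\lceil n/2\rceil}(t),
\]
not $(1-t)^{\lceil(n-1)/2\rceil}(1+t)^{\lfloor(n-1)/2\rfloor}$. The two expressions coincide only for $n\le 4$, because $A_1(t)=1$ and $A_2(t)=1+t$; already at $n=5$ one has $\Delta_5(t)=(1-t)^2(1+4t+t^2)$, giving $A^{+}_{5}=(1,14,30,14,1)$, and a direct count confirms $A^{+}_{5,1}=14$, not the $13$ your formula would yield.

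More damaging is what happens once the correct $\Delta_n$ is used: the ratio-alternating property you rely on fails. For $n=7$,
\[
A^{+}_{7}=(1,\,64,\,586,\,1208,\,605,\,56,\,0),\qquad P_{7}=(1,\,57,\,603,\,1198,\,603,\,57,\,1),
\]
so that $A^{+}_{7,k}-P_{7,k}=(0,\,7,\,-17,\,10,\,2,\,-1,\,-1)$. At the even index $k=4$ we have $A^{+}_{7,4}>P_{7,4}$, and at the odd index $k=5$ we have $A^{+}_{7,5}<P_{7,5}$; hence $(A^{+}_7,P_7)$ satisfies neither \eqref{eqn:ratioalt1} nor \eqref{eqn:ratioalt2}, and Theorem~\ref{thm:relationbetweenratioaltandstrongsynchro} cannot be applied. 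Your ``exact reduction'' of the conjecture to even log-concavity of $A_n^{+}$ and odd log-concavity of $A_n^{-}$ therefore does not hold, and the subsequent plan of bounding the $\delta$-corrections against the Eulerian log-concavity defect is built on a false premise. (This is consistent with the paper's own remark in Problem~\ref{prob:combinatorialproof} that the coefficients $A^{+}_{n,k}$ and $A^{-}_{n,k}$ are not ratio-alternating.) The conjecture remains genuinely open.
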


\begin{problem}
	\label{prob:combinatorialproof}
	It would be very interesting to find  combinatorial proofs of 
	Theorem
	\ref{thm:MainresultforevenlogconcaveTypeA}, 
	Theorem \ref{thm:MainresultforevenlogconcaveTypeB}, 
	Conjecture \ref{conj:descentfortypeA}  
	and Conjecture \ref{conj:descentandexcedancefortypeA}. 
	B{\'o}na and Ehrenborg in \cite{bonaehrenborglogconcavity} have given 
	a combinatorial proof of log-concavity of $A_{n,k}$ but the coefficients 
	$A^+_{n,k}$ and $A^-_{n,k}$ are not even ratio-alternating. 
	Hence, this argument directly does not prove strong synchronisation 
	of $A_{n,k}^+$ and $A_{n,k}^-$. 
	\end{problem}

\section*{Acknowledgements}

The author would like to thank his advisor Sivaramakrishnan Sivasubramanian for all the insightful discussions and comments during the preparation of the paper. The author also thanks Subhajit Ghosh, Venkitesh Iyer and Brahadeesh Sankarnarayanan for some suggestions during the later phase of this work. The author also acknowledges funding from CSIR-SPM fellowship. 


\bibliographystyle{acm}
\bibliography{mainstrongsynchro.bib}

\end{document}